\renewcommand{\epsilon}{\varepsilon}
\let\phi\varphi
\newtheorem{thm}{Theorem}[section]
\newtheorem{lemma}[thm]{Lemma}
\newtheorem{proposition}[thm]{Proposition}
\newtheorem{theorem}[thm]{Theorem}
\newtheorem{remark}[thm]{Remark}
\newcommand{\RR}{\mathbb{R}}
\newcommand{\N}{\mathbb{N}}
\renewcommand{\S}{\mathbb{S}}
\newcommand{\eps}{\varepsilon}
\newcommand{\rd}{\mathrm{d}}
\newcommand{\mC}{\mathcal C}
\newcommand{\mD}{\mathcal D}
\newcommand{\mB}{\mathcal B}
\newcommand{\bla}{\big\langle}
\newcommand{\bra}{\big\rangle}
\let\oldhat\hat
\renewcommand{\v}[1]{\mathbf{#1}}
\newcommand{\op}[1]{\mathsf{#1}}
\renewcommand{\hat}[1]{\oldhat{\mathbf{#1}}}
\newcommand{\TheTitle}{Exponential decay to equilibrium for a fibre lay-down process on a moving conveyor belt} 
\title{{\TheTitle}}%\thanks{This work was funded by the Fog Research Institute under contract no.~FRI-454.}}
\author{
  Emeric Bouin\thanks{CEREMADE - Universit\'e Paris-Dauphine, UMR CNRS 7534, Place du Mar\'echal de Lattre de Tassigny, 75775 Paris Cedex 16, France (bouin@ceremade.dauphine.fr).}
  \and
  Franca Hoffmann\thanks{CCA, Centre for Mathematical Sciences, University of Cambridge, Wilberforce Road, Cambridge CB3 0WA, UK
  (fkoh2@cam.ac.uk).}
%   \and
%   Franca Hoffmann
%   \thanks{Department of Mathematics, Imperial College London, South Kensington Campus, London SW7 2AZ, UK (\email{fh1009@ic.ac.uk}).}
  \and
  Cl\'ement Mouhot \thanks{DPMMS, Centre for Mathematical Sciences, University of Cambridge, Wilberforce Road, Cambridge CB3 0WA, UK (c.mouhot@dpmms.cam.ac.uk).}
}
\begin{document}

\maketitle

% REQUIRED
\begin{abstract}
  We show existence and uniqueness of a stationary state for a kinetic
  Fokker-Planck equation modelling the fibre lay-down process in the
  production of non-woven textiles. Following a micro-macro
  decomposition, we use hypocoercivity techniques to show exponential
  convergence to equilibrium with an explicit rate assuming the
  conveyor belt moves slow enough. This work is an extension of
  (Dolbeault et al., 2013), %\cite{Dolbeault2012short}
  where the authors consider the case of a stationary conveyor
  belt. Adding the movement of the belt, the global Gibbs state is not
  known explicitly. We thus derive a more general hypocoercivity
  estimate from which existence, uniqueness and exponential
  convergence can be derived. To treat the same class of potentials as
  in (Dolbeault et al., 2013), %\cite{Dolbeault2012short}
  we make use of an additional weight function following the Lyapunov
  functional approach in (Kolb et al., 2013).% \cite{Kolb2013}.
\end{abstract}
\noindent{ \bf Key-words:} Hypocoercivity, rate of convergence, fibre lay-down, existence and uniqueness of a stationary state, perturbation, moving belt,\\
\noindent{\bf AMS Class. No:} {35B20, 35B40, 35B45, 35Q84}
\section{Introduction}

The mathematical analysis of the fibre lay-down process in the
production of non-woven textiles has seen a lot of interest in recent
years \cite{Marheineke2006, Marheineke2007, Goetz2007,
  KlarMaringerWegener2010,Kolb2011, Dolbeault2012short,
  Kolb2013}. Non-woven materials are produced in melt-spinning
operations: hundreds of individual endless fibres are obtained by
continuous extrusion through nozzles of a melted polymer. The nozzles
are densely and equidistantly placed in a row at a spinning beam. The
visco-elastic, slender and in-extensible fibres lay down on a moving
conveyor belt to form a web, where they solidify due to cooling air
streams. Before touching the conveyor belt, the fibres become
entangled and form loops due to highly turbulent air flow. In
\cite{Marheineke2006} a general mathematical model for the fibre
dynamics is presented which enables the full simulation of the
process. Due to the huge amount of physical details, these simulations
of the fibre spinning and lay-down usually require an extremely large
computational effort and high memory storage, see
\cite{Marheineke2007}. Thus, a simplified two-dimensional stochastic
model for the fibre lay-down process, together with its kinetic limit,
is introduced in \cite{Goetz2007}. Generalisations of the
two-dimensional stochastic model \cite{Goetz2007} to three dimensions
have been developed by Klar et al. in \cite{KlarMaringerWegener2010}
and to any dimension $d \geq 2$ by Grothaus et al. in
\cite{GrothausKlar}.

We now describe the model we are interested in, which comes from
\cite{Goetz2007}. We track the position $x(t) \in \RR^2$ and the angle
$\alpha(t) \in \S^1$ of the fibre at the lay-down point where it
touches the conveyor belt. Interactions of neighbouring fibres are
neglected. If $x_0(t)$ is the lay-down point in the coordinate system
following the conveyor belt, then the tangent vector of the fibre is
denoted by $\tau(\alpha(t))$ with
$\tau(\alpha) = (\cos \alpha, \sin \alpha)$.  Since the extrusion of
fibres happens at a constant speed, and the fibres are in-extensible, the
lay-down process can be assumed to happen at constant normalised speed
$\| x_0'(t)\| =1$. If the conveyor belt moves with constant speed
$\kappa$ in direction $e_1 = (1,0)$, then
\begin{equation*}
 \frac{\rd x}{\rd t} =  \tau(\alpha) + \kappa e_1.
\end{equation*}
Note that the speed of the conveyor belt cannot exceed the lay-down
speed: $0 \leq \kappa \leq 1$. The fibre dynamics in the deposition
region close to the conveyor belt are dominated by the turbulent air
flow.  Applying this concept, the dynamics of the angle $\alpha(t)$
can be described by a deterministic force moving the lay-down point
towards the equilibrium $x=0$ and by a Brownian motion modelling the
effect of the turbulent air flow. We obtain the following stochastic
differential equation for the random variable $X_t = (x_t, \alpha_t)$
on $\RR^2 \times \S^1$,
\begin{equation} \label{stochastic}
 \begin{cases}
 \rd x_t &= \left( \tau(\alpha_t) + \kappa e_1 \right) \rd t,\\[0.2cm]
 \rd\alpha_t &= \left[- \tau^\bot (\alpha_t) \cdot \nabla_x V(x_t) \right] \, \rd t + A \, \rd W_t\, ,
\end{cases}
\end{equation}
where $W_t$ denotes a one-dimensional Wiener process, $A>0$ measures
its strength relative to the deterministic forcing,
$\tau^\bot=(-\sin\alpha,\cos\alpha)$, and
$V: \RR^2 \longrightarrow \RR$ is an external potential carrying
information on the coiling properties of the fibre.  More precisely,
since a curved fibre tends back to its starting point, the change of
the angle $\alpha$ is assumed to be proportional to
$\tau^\bot (\alpha) \cdot \nabla_x V(x)$.  It has been shown in
\cite{Kolb2013} that under suitable assumptions on the external
potential $V$, the fibre lay down process (\ref{stochastic}) has a
unique invariant distribution and is even geometrically ergodic (see
Remark~\ref{ergodic rmk}). The stochastic approach yields exponential
convergence in total variation norm, however without explicit rate. We
will show here that a stronger result can be obtained with a
functional analysis approach. Our argument uses crucially the
construction of an additional weight functional for the fibre lay-down
process in the case of unbounded potential gradients inspired by
\cite[Proposition 3.7]{Kolb2013}.

The probability density function $f(t, x, \alpha)$ corresponding to
the stochastic process (\ref{stochastic}) is governed by the
Fokker-Planck equation
\begin{equation} \label{kinetic1}
  \partial_t f + (\tau + \kappa e_1) \cdot \nabla_x f - \partial_\alpha \left( \tau^\bot \cdot \nabla_x V f \right) = D \partial_{\alpha \alpha} f
\end{equation}
with diffusivity $D = A^2/2$. We state below assumptions on the
external potential $V$ that will be used regularly throughout the
paper:
\begin{itemize}
\item[] \textbf{(H1)} \textit{Regularity and symmetry}:
  $V \in C^2(\RR^2)$ and $V$ is spherically symmetric outside some
  ball $B(0,R_V)$.
  \item[] \textbf{(H2)} \textit{Normalisation}: $\int_{\RR^2} e^{-V(x)} \,\rd x = 1$.
  \item[] \textbf{(H3)} \textit{Spectral gap condition} (Poincar\'e
    inequality): there exists a positive constant $\Lambda$ such that
    for any $u \in H^1 (e^{-V} \rd x) $ with
    $\int_{\RR^2} u e^{-V} \rd x = 0$,
	$$
	\int_{\RR^2} \left| \nabla_x u \right|^2 e^{-V}\, \rd x \geq \Lambda \int_{\RR^2} u^2 e^{-V} \,\rd x.
	$$
      \item[] \textbf{(H4)} \textit{Pointwise regularity condition on
          the potential}: there exists $c_1>0$ such that for any
        $x \in \RR^2$, the Hessian $\nabla_x^2 V$ of $V(x)$ satisfies
	$$ |\nabla_x^2 V(x)| \leq c_1 (1+|\nabla_x V(x)|).$$
  \item[] \textbf{(H5)} \textit{Behaviour at infinity}: 
  \begin{equation*}
 \lim_{|x|\to\infty} \frac{|\nabla_x V(x)|}{V(x)}=0, \qquad
 \lim_{|x|\to\infty} \frac{|\nabla^2_x V(x)|}{|\nabla_x V(x)|}=0\, .
\end{equation*}
\end{itemize}
\begin{remark} Assumptions \textbf{(H2-3-4)} are as stated
  in \cite{Dolbeault2012short}. Assumption \textbf{(H1)} assumes
  regularity of the potential that is stronger and included in that
  discussed in \cite{Dolbeault2012short} since \textbf{(H1)} implies
  $V \in W_{\text{loc}}^{2, \infty} (\RR^2)$.  Assumption
  \textbf{(H5)} is only necessary if the potential gradient
  $|\nabla_x V|$ is unbounded.  Both bounded and unbounded potential
  gradients may appear depending on the physical context, and we will
  treat these two cases separately where necessary.  A typical example
  for an external potential satisfying assumptions
  \textbf{(H1-2-3-4-5)} is given by
 \begin{equation}\label{ergodicV}
 V(x)=K\left(1+|x|^2\right)^{s/2}
 \end{equation}
 for some constants $K>0$ and $s\geq1$ \cite{Dolbeault2012long, Kolb2013}. The potential \eqref{ergodicV} satisfies \textbf{(H3)} since 
 $$\liminf_{|x|\to \infty} \left(|\nabla_x V|^2 -2 \Delta_x V\right) >0\, ,$$ 
 see for instance \cite[A.19. Some criteria for Poincar\'e inequalities, page 135]{Villani}. The other assumptions are trivially
 satisfied as can be checked by direct inspection. In this family of
 potentials, the gradient $\nabla_x V$ is bounded for $s=1$ and
 unbounded for $s>1$.
\end{remark}
\begin{remark}\label{ergodic rmk} The proof of ergodicity in
  \cite{Kolb2013} assumes that the potential satisfies
\begin{equation}\label{ergodicH}
 \lim_{|x|\to\infty} \frac{|\nabla_x V(x)|}{V(x)}=0\, , \qquad
 \lim_{|x|\to\infty} \frac{|\nabla^2_x V(x)|}{|\nabla_x V(x)|}=0\, ,  \qquad 
 \lim_{|x|\to\infty} \left|\nabla_xV(x)\right|=\infty\, .
\end{equation}
Under these assumptions, there exists an invariant distribution $\nu$ to the fibre lay-down process (\ref{stochastic}), and some constants $C(x_0)>0$, $\lambda>0$ such that
\begin{equation*}
 \left\|\mathcal{P}_{x_0,\alpha_0} \left(X_t \in \,\cdot\right)-\nu\right\|_{TV}\leq C(x_0)e^{-\lambda t}\, ,
\end{equation*}
where $\left\| \cdot \right\|_{TV}$ denotes the total variation norm.
The stochastic Lyapunov technique applied in \cite{Kolb2013} however
does not give any information on how the constant $C(x_0)$ depends on
the initial position $x_0$, or how the rate of convergence $\lambda$
depends on the conveyor belt speed $\kappa$, the potential $V$ and the
noise strength $A$. This can be achieved using hypocoercivity
techniques, proving convergence in a weighted $L^2$-norm, which is
slightly stronger than the convergence in total variation norm shown
in \cite{Kolb2013}. Conceptually, the conditions (\ref{ergodicH})
ensure that the potential $V$ is driving the process back inside a
compact set where the noise can be controlled. Our framework
\textbf{(H1-2-3-4-5)} is more general than conditions (\ref{ergodicH})
in some aspects (including bounded potential gradient) and more
restrictive in others (assuming a Poincar\'e inequality). The proof in
\cite{Kolb2013} relies on the strong Feller property which can be
translated in some cases into a spectral gap; it also uses
hypoellipticity to deduce the existence of a transition density, and
concludes via an explicit Lyapunov function argument. With our
framework~\textbf{(H1-2-3-4-5)}, and adapting the Lyapunov function
argument presented in \cite{Kolb2013} to control the effect of $\kappa
\partial_{x_1}$, we derive an explicit rate of convergence in terms of
$\kappa$, $D$ and $V$.
\end{remark}

To set up a functional framework, rewrite (\ref{kinetic1}) as
\begin{equation} \label{kinetic2}
 \partial_t f = \op{L}_\kappa f = \left( \op{Q} - \op{T}\right) f + \op{P}_\kappa f\, ,
\end{equation}
where the collision operator $\op{Q} := D \partial_{\alpha \alpha}$
acts as a multiplicator in the space variable $x$, $\op{P}_\kappa$ is
the perturbation introduced by the moving belt with respect to
\cite{Dolbeault2012short}:
$$
\op{P}_\kappa f := - \kappa e_1 \cdot \nabla_x f\, ,
$$
and the transport operator $\op{T}$ is given by
$$
\op{T}f := \tau \cdot \nabla_x f - \partial_\alpha \left(\tau^\bot \cdot \nabla_x V f \right).
$$
We consider solutions to \eqref{kinetic2} in the space $L^2(\rd \mu_\kappa):=L^2(\RR^2 \times \S^1, \rd \mu_\kappa)$ with measure
$$
\rd \mu_\kappa(x, \alpha) = \left(e^{V(x)} + \zeta\kappa g(x,\alpha)\right) \frac{\rd x \, \rd \alpha}{2 \pi}\, .
$$
We denote by $\langle \cdot, \cdot \rangle_\kappa$ the corresponding scalar product and by $\| \cdot \|_\kappa$ the associated norm. 
Here, $\zeta>0$ is a free parameter to be chosen later.
The construction of the weight $g$ depends on the boundedness of $\nabla_x V$. When it is bounded, no additional weight is needed to control the perturbation, and so we simply set $g \equiv 0$ in that case. When the gradient is unbounded, the weight is constructed thanks to the following proposition:
%%%%%%%%%%%%%%%%%%%%%%%%%
%%%%%%%%%%%%%%%%%%%%%%%%%
%%%%%%%%%%%%%%%%%%%%%%%%%
\begin{proposition}\label{Lyapunovprop}
Assume that $V$ satisfies $\textbf{(H1)}$ and \textbf{(H5)} and that 
\begin{equation*}  
\lim_{\vert x \vert \to\infty} \vert \nabla_x V \vert = + \infty.
\end{equation*} 
If $\kappa<1/3$ holds true, then there exists a function $g(x,\alpha)$, a constant $c=c(\kappa,D)>0$ and a finite radius $R=R(k,D,V)>0$ such that
\begin{equation}\label{Lyapunovcond}
 \forall \, |x| > R, \, \forall \, \alpha \in \S^1,
  \quad \mathcal{L}_\kappa(g)(x,\alpha) \leq - c\, |\nabla_x V(x)| g(x,\alpha)\, ,
\end{equation}
where $\mathcal{L}_\kappa$ is defined by
\begin{equation}\label{weightop}
  \mathcal{L}_\kappa(h) := D \partial_{\alpha \alpha} h
  +\left(\tau+\kappa e_1\right)\cdot \nabla_x h
  - \left( \tau^\bot \cdot \nabla_x V \right)  \partial_\alpha h - \left( \tau \cdot \nabla_x V \right) h \, .
\end{equation}
The weight $g$ is of the form
\begin{equation*}
g(x, \alpha):=\exp\left(\beta V(x) + |\nabla_x V (x)|\Gamma\left(\tau(\alpha)\cdot\frac{\nabla_x V(x)}{|\nabla_x V(x)|}\right)\right)\, ,
\end{equation*}
where the parameter $\beta>1$ and the function $\Gamma \in C^1\left([-1,1]\right)$, $\Gamma>0$ are determined along the proof and only depend on $\kappa$.
\end{proposition}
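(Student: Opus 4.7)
The plan is to use the exponential ansatz $g = e^w$ with
\[
w(x,\alpha) := \beta\,V(x) + |\nabla_x V(x)|\,\Gamma(u), \qquad u := \tau(\alpha)\cdot \hat n(x), \qquad \hat n(x) := \frac{\nabla_x V(x)}{|\nabla_x V(x)|},
\]
which is well-defined for $|x|$ large since $|\nabla_x V|\to\infty$, and to expand $\mathcal{L}_\kappa g/g$ in powers of $|\nabla_x V|$. Using $\partial_\alpha u=\tau^\bot\cdot\hat n$, $\partial_{\alpha\alpha}u=-u$ and $(\tau^\bot\cdot\hat n)^2=1-u^2$, and observing that assumption \textbf{(H5)} makes $\nabla_x\hat n=o(1)$ and $\nabla_x|\nabla_x V|=o(|\nabla_x V|)$, a direct computation yields
\[
\frac{\mathcal{L}_\kappa g}{g} \;=\; |\nabla_x V|^2\,(1-u^2)\,\Gamma'(u)\bigl(D\Gamma'(u)-1\bigr) \;+\; |\nabla_x V|\,B(u,\hat n) \;+\; o(|\nabla_x V|),
\]
where $B(u,\hat n) := D\,\Gamma''(u)(1-u^2) - Du\,\Gamma'(u) + (\beta-1)\,u + \beta\kappa\,(e_1\cdot\hat n)$.

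The next step is to choose $\beta>1$ and $\Gamma$ so that both leading coefficients have the correct sign. The quadratic-in-$|\nabla_x V|$ coefficient is non-positive if and only if $\Gamma'(u)\in[0,1/D]$, and I would enforce this inclusion to be strict on $(-1,1)$ so that this term is strictly negative there. At the endpoints $u=\pm 1$ the quadratic coefficient vanishes and the inequality is governed entirely by $B$; taking the worst direction $e_1\cdot\hat n=1$ (attained for $x$ parallel to $e_1$, $V$ being radial at infinity by \textbf{(H1)}) gives the pointwise constraints
\[
D\Gamma'(1) \;\ge\; \beta(1+\kappa)-1+c, \qquad D\Gamma'(-1) \;\le\; \beta(1-\kappa)-1-c.
\]
Combined with $\Gamma'(\pm 1)\in[0,1/D]$, these force $\beta\in\bigl(1/(1-\kappa),\,2/(1+\kappa)\bigr)$, an interval which is non-empty if and only if $(1+\kappa)<2(1-\kappa)$, i.e.\ $\kappa<1/3$: this is exactly the threshold appearing in the proposition.

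For any admissible $\beta$, I would build $\Gamma$ explicitly by prescribing $\Gamma'(\pm 1)$ satisfying the two inequalities above with some margin, monotonically interpolating so that $\Gamma'\in[0,1/D]$ on $[-1,1]$, and choosing the additive constant $\Gamma(-1)$ large enough to guarantee $\Gamma>0$. The main technical obstacle is then the pointwise verification in the interior $u\in(-1,1)$, where the quadratic coefficient is strictly negative but vanishes like $(1-u^2)$ at the endpoints, while $B$ is uniformly bounded but not necessarily $\le-c$. I would resolve this by splitting $[-1,1]$ into a neighbourhood of $\{\pm 1\}$, on which $B(u,\hat n)+c<0$ holds by continuity from the strict endpoint conditions, and a complementary compact subset of $(-1,1)$, on which the quadratic coefficient is bounded away from zero and therefore dominates for $|\nabla_x V|$ large enough. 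The radius $R$ is determined by further requiring $|\nabla_x V(x)|$ to exceed the threshold needed to absorb the $o(|\nabla_x V|)$ remainder coming from $\nabla^2_x V$; this is possible thanks to $|\nabla_x V|\to\infty$ together with \textbf{(H5)}.
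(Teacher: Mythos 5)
Your proposal is correct and takes essentially the same route as the paper: the same expansion of $\mathcal{L}_\kappa g / g$ with leading coefficient $(1-Y^2)\,\Gamma'(D\Gamma'-1)$, the same use of \textbf{(H5)} to discard the gradient-of-$Y$ and $\nabla_x|\nabla_x V|$ remainders, an increasing $\Gamma'$ with values in $(0,1/D)$, and the threshold $\kappa<1/3$ arising from compatibility of the two constraints in the aligned/anti-aligned directions (the paper realises this with an explicit piecewise-linear $\Gamma'$ and explicit choices of $\delta^\pm$, $\eps_0$, $\beta$, whereas you use endpoint constraints at $Y=\pm1$ plus a continuity/compactness splitting). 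The only point to watch is that your continuity argument near $Y=\pm 1$ needs $B$ continuous up to the endpoints, i.e.\ an interpolation with $\Gamma''(Y)(1-Y^2)\to 0$ there (e.g.\ any $C^2$ monotone interpolation), which is easily arranged and keeps the constants explicit enough for the paper's purposes.
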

%%%%%%%%%%%%%%%%%%%%%%%%%%%
%%%%%%%%%%%%%%%%%%%%%%%%%%%
%%%%%%%%%%%%%%%%%%%%%%%%%%%
We show in Section~\ref{sec:weightg} the existence of such a weight
function $g$ under appropriate conditions following ideas from
\cite{Kolb2013}.

\medskip
We denote $\mC:=C_c^\infty\left(\RR^2 \times \S^1\right)$, and define the orthogonal projection $\op{\Pi}$ on the set of
local equilibria $\text{Ker} \, \op{Q}$%  consisting of all
% $\alpha$-independent distributions,
$$
\op{\Pi} f :=\int_{\S^1} f \,\frac{\rd \alpha}{2 \pi} \,,
$$
and the \textit{mass} $M_f$ of a given distribution $f \in L^2(\rd
\mu_\kappa)$,
\begin{equation*}
M_f :=\int_{\RR^2 \times \S^1}  f \,\frac{ \rd x \rd \alpha}{2 \pi}  \, .
\end{equation*}
Integrating \eqref{kinetic1} over $\RR^2 \times \S^1$ shows that the
mass of solutions of \eqref{kinetic1} is conserved over time, and
standard maximum principle arguments show that it remains non-negative
for non-negative initial data. 
The collision operator $\op{Q}$ is symmetric and satisfies
$$
\forall \, f \in \mC, \quad \langle \op{Q} f, f \rangle_0 = - D \left\| \partial_\alpha f \right\|_0^2 \leq 0 \,,
$$
i.e. $\op{Q}$ is dissipative in $L^2(\rd \mu_0)$. Further, we have
$\op{T \Pi} f = e^{-V} \tau \cdot \nabla_x u_f$ for $f \in \mC$, with
$u_f := e^V \op{\Pi}f$, which implies $\op{\Pi T \Pi} = 0$ on $\mC$.
Since the transport operator $\op{T}$ is skew-symmetric with respect
to $\langle \cdot \, , \, \cdot \rangle_0$, 
$$
\langle \op{L}_\kappa f, f \rangle_0 = \langle \op{Q}f, f \rangle_0 + \langle \op{P}_\kappa f, f \rangle_0
$$
for any $f$ in $\mC$. In the case $\kappa = 0$, if the entropy
dissipation $-\langle \op{Q}f, f \rangle_0$ was coercive with respect
to the norm $\| \cdot \|_0$, exponential decay to zero would follow as
$t \to \infty$. However, such a coercivity property cannot hold since
$\op{Q}$ vanishes on the set of local equilibria. Instead, Dolbeault
et al. \cite{Dolbeault2012long} applied a strategy called
\emph{hypocoercivity} (as theorised in \cite{Villani}) and developed
by several groups in the 2000s, see for instance
\cite{MR2034753,MR1946444,MR2043729,MR1787105,MR2116276}.
% developed by several groups (see for instance
% \cite{MR2034753,MR1946444,MR2043729,MR1787105,MR2116276}) and
% theorised by Villani in \cite{Villani}. 
The full hypocoercivity
analysis of the long time behaviour of solutions to this kinetic model
in the case of a stationary conveyor belt, $\kappa = 0$, is completed
in \cite{Dolbeault2012short}. For technical applications in the
production process of non-wovens, one is interested in a model
including the movement of the conveyor belt, and our aim is to extend
the results in \cite{Dolbeault2012short} to small $\kappa > 0$.

\medskip We follow the approach of hypocoercivity for linear kinetic
equations conserving mass developed in \cite{Dolbeault2012long}, with
several new difficulties. Considering the case
$\kappa=0$, $\op{Q}$ and $\op{T}$ are closed operators on
$L^2(\rd \mu_0)$ such that $\op{Q}-\op{T}$ generates the
$\mC_0$-semigroup $e^{(\op{Q}-\op{T})t}$ on $L^2(\rd \mu_0)$. When
$\kappa>0$, we use the additional weight function $g>0$ to control the
perturbative term $\op{P}_\kappa$ in the case of unbounded potential
gradients; and show the existence of a $\mC_0$-semigroup for
$\op{L}_\kappa= \op{Q}-\op{T}+\op{P}_\kappa$ (see Section
\ref{sec:semigroup}). 
Unless otherwise specified, all computations are performed on the
operator core $\mC$, and can be extended to $L^2(\rd \mu_\kappa)$ by density arguments.

When $\kappa=0$, the hypocoercivity result in
\cite{Dolbeault2012long,Dolbeault2012short} is based on:
\emph{microscopic coercivity}, which assumes that the restriction of
$\op{Q}$ to $(\text{Ker}\,\op{Q})^\bot $ is coercive, and
\emph{macroscopic coercivity}, which is a spectral gap-like inequality
for the operator obtained when taking a parabolic drift-diffusion
limit, in other words, the restriction of $\op{T}$ to
$\text{Ker} \, \op{Q}$ is coercive. The two properties are satisfied in the case of a stationary conveyor belt:
\begin{itemize}
\item The operator $\op{Q}$ is symmetric and the Poincar\'e inequality
  on $\S^1$,
\begin{equation*}
\frac{1}{2\pi}\int_{\S^1} |\partial_\alpha f|^2\, \rd \alpha 
\geq \frac{1}{2\pi}\int_{\S^1} \left( f - \frac{1}{2\pi}\int_{\S^1} f \,\rd \alpha \right)^2 \,\rd \alpha,
\end{equation*}
implies that $- \langle \op{Q}f, f \rangle_0 \geq D\left\|(1-\op{\Pi})f\right\|_0^2$. 
\item The operator $\op{T}$ is skew-symmetric and for any
  $h \in L^2(\rd \mu_0)$ such that $u_h =e^V \op{\Pi}h \in H^1(e^{-V}\rd x)$ and
  $\int_{\RR^2 \times \S^1} h \, \rd \mu_0 = 0$, \textbf{(H3)} implies
\begin{equation*}
\hspace{-0.5cm}
 \left\|\op{T \Pi} h \right\|_0^2 = 
\frac{1}{4 \pi} \int_{\RR^2 \times \S^1} e^{-V} |\nabla_x u_h |^2
                                             \, \rd x \, \rd \alpha 
  \geq \frac{\Lambda}{4 \pi} \int_{\RR^2 \times \S^1} e^{-V} u_h^2
         \, \rd x \, \rd \alpha = \frac{\Lambda}{2} \, \left\|\op{\Pi} h \right\|_0^2.
\end{equation*}
\end{itemize}
\medskip

In the case $\kappa = 0$, the unique global normalised equilibrium
distribution $F_0 = e^{-V}$ lies in the intersection of the null
spaces of $\op{T}$ and $\op{Q}$. When $\kappa >0$, $F_0$ is not in the
kernel of $\op{P}_\kappa$ and we are not able to find the global Gibbs
state of (\ref{kinetic2}) explicitly. 
However, the hypocoercivity theory is based on a priori estimates
\cite{Dolbeault2012long} that are, as we shall prove, to some extent
stable under perturbation. Our main result reads: 
%%%%%%%%%%%%%%%%%%%%%
%%%%%%%%%%%%%%%%%%%%%HYPOTHM
%%%%%%%%%%%%%%%%%%%%%
\begin{theorem}\label{hypothm}
  Let $f_{\mbox{{\scriptsize {\em in}}}} \in L^2(\rd \mu_\kappa)$ and let
  \textbf{(H1-2-3-4-5)} hold. For $0<\kappa< 1$ small enough
  (with a quantitative estimate) and $\zeta>0$ large enough (with a
  quantitative estimate), there exists a unique non-negative stationary state
  $F_\kappa \in L^2(\rd \mu_\kappa)$ with unit mass $M_{F_\kappa} =
  1$. In addition, for any solution $f$ of \eqref{kinetic1} in $L^2(\rd \mu_\kappa)$ with mass
  $M_f$ and subject to the initial condition $f(t=0)=f_{\mbox{{\scriptsize
        {\em in}}}}$, we have
  \begin{align}\label{expdecay}
    \left\|f(t,\cdot) - M_f F_\kappa\right\|_{\kappa} 
     \leq C \left\|f_{\mbox{{\scriptsize {\em in}}}}-M_f F_\kappa\right\|_{\kappa}   \, e^{-\lambda_\kappa t}\, , 
  \end{align}
where the rate of convergence $\lambda_\kappa >0$ depends only on $\kappa$, $D$ and $V$, and the constant $C>0$ depends only on $D$ and $V$.
\end{theorem}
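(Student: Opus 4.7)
The plan is to run the Dolbeault--Mouhot--Schmeiser modified-entropy hypocoercivity method in the weighted space $L^2(\rd \mu_\kappa)$, treating $\op{P}_\kappa$ as a small perturbation of the $\kappa=0$ operator. Introduce the auxiliary operator $\op{A}:=(1+(\op{T\Pi})^*(\op{T\Pi}))^{-1}(\op{T\Pi})^*$ on $L^2(\rd \mu_0)$ and the modified functional
\[
\mathsf{H}_\epsilon[f] := \tfrac{1}{2}\|f\|_\kappa^2 + \epsilon\,\langle \op{A}f, f\rangle_0,
\]
which is norm-equivalent to $\|\cdot\|_\kappa^2$ for $\epsilon>0$ small. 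The aim is to establish $\frac{d}{dt}\mathsf{H}_\epsilon[f(t)]\leq -2\lambda_\kappa\,\mathsf{H}_\epsilon[f(t)]$ along mass-zero solutions of \eqref{kinetic1}, with $\lambda_\kappa>0$ depending only on $\kappa$, $D$ and $V$; the bound \eqref{expdecay} then follows by Gr\"onwall and norm equivalence.

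The core computation is the dissipation of $\|f\|_\kappa^2$. Splitting $\rd \mu_\kappa = e^V\,\rd x\,\rd \alpha/(2\pi)+\zeta\kappa\,g\,\rd x\,\rd \alpha/(2\pi)$: on the $e^V$-piece, $\op{Q}$ together with the Poincar\'e inequality on $\S^1$ yields $-D\|(1-\op{\Pi})f\|_0^2$, $\op{T}$ contributes nothing by skew-symmetry, and $\op{P}_\kappa$ leaves the remainder $R_0 := (\kappa/2)\int f^2\,(\partial_{x_1}V)\,e^V\,\rd x\,\rd \alpha/(2\pi)$. On the weighted piece, integration by parts against $g$ produces $(\zeta\kappa/2)\int f^2\,\mathcal{L}_\kappa(g)\,\rd x\,\rd \alpha/(2\pi)$ plus a non-positive diffusive contribution from $\op{Q}$. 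Proposition~\ref{Lyapunovprop} then gives $\mathcal{L}_\kappa(g)\leq -c|\nabla_x V|\,g$ outside a ball $B_R$: in the unbounded-gradient case, taking $\zeta$ large enough, the negative term $-(c\zeta\kappa/2)\int_{|x|>R}f^2|\nabla_x V|g$ dominates the far-field part of $R_0$ (using $g/e^V\sim e^{(\beta-1)V}\to\infty$ since $\beta>1$), while on $B_R$ the leftover is uniformly bounded; in the bounded-gradient case $g\equiv 0$ and $R_0\leq(\kappa/2)\|\nabla_x V\|_\infty\|f\|_0^2$ is trivially small.

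For the correction $\epsilon\langle \op{A}f,f\rangle_0$, I follow \cite{Dolbeault2012short}: microscopic and macroscopic coercivity (both already verified in the excerpt) combined with boundedness of $\op{A}$, $\op{TA}$ and $(1-\op{\Pi})\op{TA}$ on $L^2(\rd \mu_0)$ produce a coercive term $-\epsilon\delta\|\op{\Pi}f\|_0^2$, with the standard remainders absorbed into the microscopic dissipation for $\epsilon$ small. The new contributions involving $\op{A}\op{P}_\kappa$ and its adjoint are handled by Cauchy--Schwarz together with \textbf{(H1)} and \textbf{(H4)}, yielding at worst an $O(\kappa)(\|(1-\op{\Pi})f\|_0^2+\|\op{\Pi}f\|_0^2)$ error, absorbable for $\kappa$ small. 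The bounded-region leftover from the weighted piece is likewise absorbed into this macroscopic coercivity. A joint calibration of $(\epsilon,\zeta,\kappa)$ then closes the inequality.

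Existence and uniqueness of $F_\kappa$ follow from the coercivity: uniqueness by applying the estimate to the mass-zero difference of any two candidates, and existence via a Cauchy-in-time argument on the semigroup orbit of $F_0=e^{-V}$ (whose well-posedness is the subject of Section~\ref{sec:semigroup}), with non-negativity and unit mass passing to the limit. The convergence \eqref{expdecay} for arbitrary initial data is the coercivity estimate applied to $f-M_fF_\kappa$. The main obstacle is the unbounded-gradient regime, where $\op{P}_\kappa$ cannot be controlled by $\|f\|_0$ alone and the weight parameters $(\beta,\Gamma,\zeta)$ from Proposition~\ref{Lyapunovprop} must be calibrated so that the Lyapunov drift beats $R_0$ both pointwise at infinity and against the $\op{A}$-correctors; this is precisely where the constraints $\beta>1$ and $\kappa<1/3$ from Proposition~\ref{Lyapunovprop} enter, and where one must verify that the boundedness and commutator estimates used for $\op{A}$ in the $\kappa=0$ theory remain valid under the inner product $\langle\cdot,\cdot\rangle_\kappa$ up to $O(\kappa)$ corrections.
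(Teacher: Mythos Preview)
Your hypocoercivity computation follows the paper's: same modified functional $\mathsf{H}_\epsilon=\op{G}$, same four-way split of the dissipation (the unperturbed $\kappa=0$ part, the $\op{A}$--$\op{P}_\kappa$ cross-terms, the remainder $R_0=\langle \op{P}_\kappa f,f\rangle_0$, and the $g$-weighted piece handled via Proposition~\ref{Lyapunovprop}), and the same calibration of $(\epsilon,\zeta,\kappa)$. Two minor points you understate: the bounds on $\op{A}\op{P}_\kappa$ and $\op{P}_\kappa^*\op{A}$ require genuine elliptic regularity (the $H^2$ estimate $\|\nabla_x^2 u_h\|_{L^2(e^{-V}\rd x)}\le C_V\|\op{\Pi}f\|_0$, which is where \textbf{(H4)} actually enters, and an $H^1\!\to\!H^{-1}$ duality argument respectively), not just Cauchy--Schwarz; and your closing worry about whether the $\op{A}$-estimates survive under $\langle\cdot,\cdot\rangle_\kappa$ is a non-issue, since the paper, like you, keeps the $\op{A}$-correction in the $\langle\cdot,\cdot\rangle_0$ inner product throughout.

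Where you genuinely diverge is the existence of $F_\kappa$. The paper proves the Gr\"onwall estimate $\tfrac{d}{dt}\op{G}[f]\le -\gamma_1\op{G}[f]+\gamma_2 M_f^2$ for \emph{all} $f$ (Proposition~\ref{hypoestimate}), not only mass-zero ones; this makes the set $\mB=\{f\ge0,\ M_f=1,\ \op{G}[f]\le\gamma_2/\gamma_1\}$ invariant and each $S_t$ a strict contraction on it, after which existence is obtained from Banach fixed points at dyadic times $t_n=2^{-n}$, weak compactness of $\mB$ (with a tightness argument to preserve mass), and density of the dyadics. Your Cauchy-in-time argument on the orbit $S_tF_0$ is more direct, but as written it has a gap: deducing that $(S_tF_0)_{t\ge0}$ is Cauchy from the mass-zero contraction requires $\sup_t\|S_tF_0\|_\kappa<\infty$, which you never establish, and which is precisely what the full Gr\"onwall estimate with the $+\gamma_2 M_f^2$ term supplies. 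Your own dissipation computation already contains this term, since macroscopic coercivity only controls $\|\op{\Pi}(f-M_fe^{-V})\|_0^2=\|\op{\Pi}f\|_0^2-M_f^2$; you simply discarded the residual by restricting to mass zero. Once you record the full inequality, your Cauchy route closes and is arguably cleaner than the paper's fixed-point-plus-weak-compactness construction.
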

%%%%%%%%%%%%%%%%%%%
%%%%%%%%%%%%%%%%%%%
%%%%%%%%%%%%%%%%%%%

In the case of a stationary conveyor belt $\kappa=0$ considered
in~\cite{Dolbeault2012short}, the stationary state is characterised by
the eigenpair $(\Lambda_0, F_0)$ with $\Lambda_0=0$, $F_0 =e^{-V}$,
and so $\text{Ker}\,\op{L}_0=\langle F_0 \rangle$.
% In~\cite{Dolbeault2012short}, the exponential rate of convergence
% $\lambda_0$ was estimated explicitly in terms of the diffusion
% coefficient $D$. 
This means that there is an isolated eigenvalue
$\Lambda_0=0$ and a spectral gap of size at least $[-\lambda_0, 0]$
with the rest of the spectrum $\Sigma(\op{L_0})$ to the left of
$-\lambda_0$ in the complex plane.  Adding the movement of the
conveyor belt, Theorem~\ref{hypothm} shows that
$\text{Ker}\,\op{L}_\kappa=\langle F_\kappa \rangle$ and the
exponential decay to equilibrium with rate $\lambda_\kappa$
corresponds to a spectral gap of size at least $[-\lambda_\kappa,
0]$. Further, it allows to recover an explicit expression for the rate of
convergence $\lambda_0$ for $\kappa=0$ (see Step~5 in
Section~\ref{propproof}).  In general, we are not able to compute the
stationary state $F_\kappa$ for $\kappa>0$ explicitly, but $F_\kappa$
converges to $F_0=e^{-V}$ weakly as $\kappa \to 0$ (see the discussion
in Section~\ref{sec:concluding-remarks}). Let us finally emphasize
that a specific contribution of our paper is to introduce \emph{two}
(and not one as in \cite{Dolbeault2012long,Dolbeault2012short})
modifications of the entropy: 1) we first modify the \emph{space
  itself} with the coercivity weight $g$, then 2) we change the norm
with an auxiliary operator following the hypocoercivity approach.  \medskip

The rest of the paper deals with the case $\kappa>0$ and is organised
as follows. In Section~\ref{sec:hypo}, we prove the main
hypocoercivity estimate. This allows us to establish the existence of
solutions to \eqref{kinetic1} using semigroup theory and to deduce the
existence and uniqueness of a steady state in Section~\ref{sec:exuni}
by a contraction argument. In Section \ref{sec:weightg}, we give a
detailed definition of the weight function $g$ that is needed for the
hypocoercivity estimate in Section~\ref{sec:hypo}.

%%%%%%%%%%%%%%%%%%%%%%%%%%%%%%%%%%%%%%%%%%%%%%%%%%%%%%%%%
%%%%%%%%%%%%%%%%%%%%%%%%%%%%%%%%%%%%%%%%%%%%%%%%%%%%%%%%
%%%%%%%%%%%%%%%%%%%%%%%%%%%%%%%%%%%%%%%%%%%%%%%%%%%%%%%%%%%
%%%%%%%%%%%%%%%%%%%%%%%%%%%%%%%%%%%%%%%%%%%%%%%%%%%%%%%%%
\section{Hypocoercivity estimate}\label{sec:hypo}
%%%%%%%%%%%%%%%%%%%%%%%%%%%%%%%%%%%%%%%%%%%%%%%%%%%%%%%%%
%%%%%%%%%%%%%%%%%%%%%%%%%%%%%%%%%%%%%%%%%%%%%%%%%%%%%%%%
%%%%%%%%%%%%%%%%%%%%%%%%%%%%%%%%%%%%%%%%%%%%%%%%%%%%%%%%
%%%%%%%%%%%%%%%%%%%%%%%%%%%%%%%%%%%%%%%%%%%%%%%%%%%%%%%%

Following \cite{Dolbeault2012long} we introduce the auxiliary operator
\begin{equation*}
\op{A} := (1+(\op{T}\op{\Pi})^*(\op{T}\op{\Pi}))^{-1}(\op{T} \op{\Pi})^*\, ,
\end{equation*}
and a modified entropy, i.e. a \emph{hypocoercivity functional}
$\op{G}$ on $L^2(\rd \mu_\kappa)$:
\begin{equation*}
\op{G}[f] := \frac{1}{2} \|f\|_\kappa^2 + \eps_1 \langle \op{A} f, f \rangle_0
\, , \quad f \in L^2(\rd \mu_\kappa)
\end{equation*}
for some suitably chosen $\eps_1 \in (0,1)$ to be determined later. It
follows from \cite{Dolbeault2012long} that
$|\langle \op{A} f, f \rangle_0| \le \| f \|^2_0$. Also, $\| f \|^2_0 \le \| f \|^2_\kappa$ by construction of $\mu_\kappa$, and hence $\op{G}[\cdot]$ is norm-equivalent to
$\| \cdot \|_\kappa^2$:
\begin{equation}\label{normequiv}
\forall \,  f \in L^2(\rd \mu_\kappa), \quad 
\left(\frac{1-\eps_1}{2}\right)\, \|f\|_{\kappa}^2 \, \leq \, \op{G}[f] \, \leq \, \left(\frac{1+\eps_1}{2}\right)\,  \|f\|_{\kappa}^2, 
\end{equation}

In this section, we prove the following hypocoercivity estimate:
%%%%%%%%%%%%%%%%%%%%%%%%%%%%%
%%%%%%%%%%%%%%%%%%%%%%%%%%%%%HYPOESTIMATE
%%%%%%%%%%%%%%%%%%%%%%%%%%%%%
\begin{proposition}\label{hypoestimate}
  Assume that hypothesis \textbf{(H1-2-3-4-5)} hold and that
  $0<\kappa < 1$ is small enough (with a quantitative estimate). Let
  $f_{\mbox{{\scriptsize {\em in}}}} \in L^2(\rd \mu_\kappa)$ and
  $f=f(t,x,\alpha)$ be a solution of \eqref{kinetic1} in
  $L^2(\rd \mu_\kappa)$ subject to the initial condition
  $f(t=0)=f_{\mbox{{\scriptsize {\em in}}}}$. Then $f$ satisfies the
  following Gr\"{o}nwall type estimate:
\begin{equation} \label{Gbound}
\frac{\rd}{\rd t} \op{G}[f(t,\cdot)] \leq - \gamma_1 \op{G}[f(t,\cdot)]  + \gamma_2 M_f^2,
\end{equation}
where $\gamma_1>0, \gamma_2>0$ are explicit constants only depending
on $\kappa$, $D$ and $V$.
\end{proposition}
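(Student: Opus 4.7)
The plan is to extend the Dolbeault--Mouhot--Schmeiser hypocoercivity scheme \cite{Dolbeault2012long} to account for two new effects: the perturbation $\op{P}_\kappa$ and the extra weight $\zeta\kappa g$ built into $d\mu_\kappa$. Differentiating $\op{G}[f(t,\cdot)]$ along \eqref{kinetic2} and splitting $\op{L}_\kappa = \op{Q}-\op{T}+\op{P}_\kappa$, the goal is to show that microscopic dissipation from $\op{Q}$, macroscopic coercivity from $-\langle \op{A T \Pi}f, f\rangle_0$, and the new Lyapunov dissipation $\mathcal{L}_\kappa(g) \le -c|\nabla_x V|g$ of Proposition~\ref{Lyapunovprop} jointly dominate every error term, leaving an exponential-decay bound plus the source $\gamma_2 M_f^2$ coming from the fact that $e^{-V}$ is no longer in $\ker \op{L}_\kappa$.

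For the first piece $\tfrac{1}{2}\tfrac{d}{dt}\|f\|_\kappa^2 = \langle \op{L}_\kappa f, f\rangle_\kappa$, I would decompose $d\mu_\kappa = d\mu_0 + \zeta\kappa g\, dx\,d\alpha/(2\pi)$. The $d\mu_0$-part yields $\langle \op{Q}f, f\rangle_0 \le -D\|(1-\op{\Pi})f\|_0^2$, a vanishing skew-symmetric contribution from $\op{T}$, and, after one integration by parts, the bad error $\tfrac{\kappa}{2}\int f^2\,\partial_{x_1}V\, e^V$ from $\op{P}_\kappa$. The $g$-part is most efficiently handled by a second integration by parts that reconstructs precisely the operator $\mathcal{L}_\kappa$ of \eqref{weightop}:
$$
\int 2f\,(\op{L}_\kappa f)\, g\,\tfrac{dx\,d\alpha}{2\pi} = -2D\int (\partial_\alpha f)^2 g\,\tfrac{dx\,d\alpha}{2\pi} + \int f^2\,\mathcal{L}_\kappa(g)\,\tfrac{dx\,d\alpha}{2\pi}\,.
$$
By Proposition~\ref{Lyapunovprop} this majorises $-\tfrac{c\zeta\kappa}{2}\int_{|x|>R} f^2 |\nabla_x V| g$ plus a bounded remainder supported on $B(0,R)$. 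Since $g \ge e^{\beta V} \ge e^V$ for $\beta>1$, choosing $\zeta$ large enough forces this Lyapunov dissipation to absorb the bad $\op{P}_\kappa$-term on $\{|x|>R\}$; its compact-support remainder collapses to a bounded multiple of $\|f\|_{L^2(B(0,R))}^2$.

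For the $\op{A}$-term I would follow the standard decomposition
$$
\tfrac{d}{dt}\langle \op{A}f, f\rangle_0 = -\langle \op{A T \Pi}f, f\rangle_0 + \mathcal{R}_{\mathrm{DMS}}(f) + \mathcal{R}_\kappa(f)\,,
$$
where $\mathcal{R}_{\mathrm{DMS}}$ collects the three standard error terms involving $\op{A Q}$, $\op{A T}(1-\op{\Pi})$, and $\op{T A}$, each controlled in \cite{Dolbeault2012long} by $\|(1-\op{\Pi})f\|_0^2$ up to a small multiple of $\|\op{\Pi}f\|_0^2$, and $\mathcal{R}_\kappa$ gathers the two new perturbation terms $\langle \op{A P}_\kappa f, f\rangle_0 + \langle \op{A}f, \op{P}_\kappa f\rangle_0$, bounded by $O(\kappa)\|f\|_0^2$ via boundedness of $\op{A}$ on $L^2(d\mu_0)$ and Cauchy--Schwarz. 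The principal term $\langle \op{A T \Pi}f, f\rangle_0$ is coercive by the spatial Poincar\'e inequality \textbf{(H3)}, bounded below by $\tfrac{\Lambda}{1+\Lambda}\|\op{\Pi}f\|_0^2$ after subtracting the mass-carrying component of $\op{\Pi}f$; the residual is exactly what generates the $\gamma_2 M_f^2$ source term in \eqref{Gbound}.

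Summing the two estimates and invoking the norm equivalence \eqref{normequiv} closes the Gr\"onwall inequality \eqref{Gbound}, provided one chooses, in this order, $\eps_1 \in (0,1)$ small enough for $\mathcal{R}_{\mathrm{DMS}}$ to be absorbed into a fraction of the microscopic dissipation, then $\zeta$ large enough for the Lyapunov dissipation to dominate the $\op{P}_\kappa$-error at infinity, and finally $\kappa$ small enough to control $\mathcal{R}_\kappa$ together with the $B(0,R)$-contributions. The main obstacle is precisely this triple balance of $(\eps_1,\zeta,\kappa)$: since $\tfrac{\kappa}{2}\int f^2 |\nabla_x V| e^V$ is not bounded by the microscopic dissipation when $|\nabla_x V|$ is unbounded, the argument must exploit the sharp pointwise Lyapunov structure of $g$ constructed in Section~\ref{sec:weightg}, which in turn forces $\kappa<1/3$ through the hypothesis of Proposition~\ref{Lyapunovprop}.
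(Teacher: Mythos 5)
Your outline follows the same route as the paper: the same functional $\op{G}$, the same splitting of $\tfrac{\rd}{\rd t}\op{G}[f]$ into the DMS terms, the two new cross terms involving $\op{P}_\kappa$, the term $\langle \op{P}_\kappa f,f\rangle_0$, and the $g$-weighted term; the same identity $\int 2f\,\op{L}_\kappa(f)\,g = -2D\int(\partial_\alpha f)^2 g + \int f^2\mathcal{L}_\kappa(g)$ combined with Proposition~\ref{Lyapunovprop}; the same production of the $\gamma_2 M_f^2$ source from the macroscopic coercivity of $\langle \op{AT\Pi}f,f\rangle_0$ after removing the mass mode; and the same ordering of parameters ($\eps_1$, then $\zeta$, then $\kappa$). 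However, there is one genuine gap, in your treatment of the new terms $\mathcal{R}_\kappa(f)=\eps_1\left(\langle \op{AP}_\kappa f,\op{\Pi}f\rangle_0+\langle \op{P}_\kappa^*\op{A}f,\op{\Pi}f\rangle_0\right)$. You assert these are $O(\kappa)\|f\|_0^2$ ``via boundedness of $\op{A}$ on $L^2(\rd\mu_0)$ and Cauchy--Schwarz''. This does not work as stated: $\op{P}_\kappa=-\kappa e_1\cdot\nabla_x$ is an unbounded first-order operator, so boundedness of $\op{A}$ gives no control of $\op{AP}_\kappa f$ or $\op{P}_\kappa^*\op{A}f$ by $\|f\|_0$; and if you instead move $\op{P}_\kappa$ onto $\op{A}^*\op{\Pi}f$ by duality you generate $\nabla_x(\op{A}^*\op{\Pi}f)$ and $(e_1\cdot\nabla_x V)\op{A}^*\op{\Pi}f$, which are again not controlled by boundedness of $\op{A}$ alone (recall $\nabla_x V$ may be unbounded). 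The compositions $\op{AP}_\kappa$ and $\op{P}_\kappa^*\op{A}$ \emph{are} bounded with norm $O(\kappa)$, but proving this is the technical heart of the perturbative step: the paper computes the adjoint $(\op{AP}_\kappa)^*f=\kappa\,\tau\cdot\nabla_x(e_1\cdot\nabla_x u_h)\,e^{-V}$, where $u_h$ solves the elliptic problem $e^{-V}u_h-\tfrac12\nabla_x\cdot(e^{-V}\nabla_x u_h)=\op{\Pi}f$, and invokes the second-order elliptic regularity estimate $\|\nabla_x^2 u_h\|_{L^2(e^{-V}\rd x)}\leq C_V\|\op{\Pi}f\|_0$ (the same estimate behind the DMS bound on $\op{AT}(1-\op{\Pi})$), together with an $H^1(e^{-V}\rd x)\to H^{-1}(e^{-V}\rd x)$ estimate for $\op{A}^*\op{P}_\kappa$; this yields the bounds \eqref{boundAP} and \eqref{boundP*A}, namely $\|\op{AP}_\kappa f\|_0\leq \kappa C_V\|\op{\Pi}f\|_0/\sqrt2$ and $\|\op{P}_\kappa^*\op{A}f\|_0\leq\sqrt2\,\kappa\|f\|_0$. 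Without an argument of this type, your bound on $\mathcal{R}_\kappa$, and hence the closing of the Gr\"onwall inequality, is unjustified.

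A minor further remark: your absorption of the bad term $\tfrac{\kappa}{2}\int f^2|\nabla_x V|e^V$ by the Lyapunov dissipation keeps the factor $|\nabla_x V|$ in the negative term and compares $e^V$ with $g\geq e^{\beta V}$ directly; the paper instead drops $|\nabla_x V|\geq 1$ outside $B(0,R)$ and uses the constant $C_3=\sup|\nabla_x V|e^Vg^{-1}$, finite by \textbf{(H5)} and $\beta>1$. Both variants are legitimate (the paper even notes yours could give better constants), so this is a presentational difference, not a gap.
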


Note that the estimate (\ref{Gbound}) is stronger than what is
required for the uniqueness of a global Gibbs state, and represents an
extension of the estimate given in \cite{Dolbeault2012short}. When
applied to the difference of two solutions with the same mass,
(\ref{Gbound}) gives an estimate on the exponential decay rate towards
equilibrium.

%%%%%%%%%%%%%%%%%%%%%
%%%%%%%%%%%%%%%%%%%%%
%%%%%%%%%%%%%%%%%%%%%
\subsection{Proof of Proposition \ref{hypoestimate}}
\label{propproof}
%%%%%%%%%%%%%%%%%%%%%
%%%%%%%%%%%%%%%%%%%%%
%%%%%%%%%%%%%%%%%%%%%

Differentiate in time $\op{G}[f]$ to get
$$
\frac{\rd}{\rd t} \op{G}[f] = \op{D}_0 [f] + \op{D}_1 [f]+\op{D}_2[f]+\op{D}_3[f]\, ,
$$
where the \emph{entropy dissipation functionals} $\op{D}_0$, $\op{D}_1$, $\op{D}_2$ and $\op{D}_3$ are given by
\begin{align*}
 \op{D}_0[f] &:= \left\langle \op{Q}f, f \right\rangle_0 
- \eps_1 \left\langle \op{A T \Pi}f,\op{ \Pi} f \right\rangle_0  - \eps_1 \left\langle \op{A T}(1- \op{ \Pi})f, \op{ \Pi}f \right\rangle_0\\
&\qquad + \eps_1 \left\langle \op{TA}f, (1- \op{ \Pi})f \right\rangle_0 + \eps_1 \left\langle \op{AQ}f, \op{ \Pi}f \right\rangle_0\, , \\%\label{D0}
\op{D}_1 [f] &:= \eps_1 \left\langle \op{A P}_\kappa f , \op{ \Pi}f \right\rangle_0 + \eps_1 \left\langle \op{P}_\kappa^* \op{A} f, \op{ \Pi}f \right\rangle_0 \, , \\
\op{D}_2[f] &:=\left\langle \op{P}_\kappa f, f \right\rangle_0\, , \\
\op{D}_3[f] &:= \kappa\zeta \int_{\RR^2 \times \S^1} \op{L}_\kappa(f) f g \,\frac{\rd x \, \rd \alpha }{2\pi}\, .
\end{align*}
Note that the term $\langle \op{LA}f, f \rangle_0$ vanishes since it
has been shown in \cite{Dolbeault2012long} that $\op{A}=\op{\Pi A}$
and hence $\op{A}f \in \text{Ker}\,\op{Q}$. Further,
$\langle \op{T}f, f \rangle=0$ since $\op{T}$ is skew-symmetric. We
estimate the entropy dissipation of the case $\kappa=0$ as in
\cite{Dolbeault2012short}:

\bigskip

\noindent\textbf{ \# Step 1: Estimation of $\op{D}_0 [f]$.}

\bigskip

We will show the boundedness of $\op{D}_0$, which is in fact the dissipation functional for a stationary conveyor belt. We thus recall without proof in the following lemma some results from \cite{Dolbeault2012short}. 
%%%%%%%%%%%%%%%%%%%%%%%
%%%%%%%%%%%%%%%%%%%%%%%
\begin{lemma}[Dolbeault et al. \cite{Dolbeault2012short}]\label{lem:estDol}
The following estimates hold:
\begin{equation*}
 \langle \op{Q}f, f \rangle_0 \leq -  \left\|(1-\op{\Pi})f \right\|_0^2\, ,\qquad 
\left\|\op{AT}(1-\op{\Pi})f\right\|_0 \leq C_V \left\|(1-\op{\Pi})f\right\|_0\, ,
\end{equation*}
\begin{equation*}
\left\|\op{AQ}f\right\|_0 \leq \frac{D}{2} \left\|(1-\op{\Pi})f\right\|_0\, , \qquad \left\|\op{TA}f\right\|_0 \leq \left\|(1-\op{\Pi})f \right\|_0\, .
\end{equation*}
\end{lemma}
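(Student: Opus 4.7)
The plan is to exploit, throughout, the abbreviation $\op{B}:=\op{T\Pi}$ (so $\op{A}=(1+\op{B}^*\op{B})^{-1}\op{B}^*$) together with two structural facts already recalled in the excerpt: $\op{A}=\op{\Pi A}$, and $\op{\Pi T\Pi}=0$ (which in particular gives $\langle\op{\Pi}h,\op{B}v\rangle_0=0$ for any $h,v$). Spectral calculus on the self-adjoint positive operator $\op{B}^*\op{B}$ also supplies the contraction bounds $\|(1+\op{B}^*\op{B})^{-1}\|\leq 1$ and $\|\op{B}(1+\op{B}^*\op{B})^{-1}\op{B}^*\|\leq 1$, which I will use when closing the inequalities.

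The first bound on $\langle\op{Q}f,f\rangle_0$ is immediate: integration by parts in $\alpha$ yields $\langle\op{Q}f,f\rangle_0=-D\|\partial_\alpha f\|_0^2=-D\|\partial_\alpha(1-\op{\Pi})f\|_0^2$, and applying the one-dimensional Poincar\'e inequality on $\S^1$ pointwise in $x$ to the zero-mean function $(1-\op{\Pi})f$ and integrating against $e^V\,\rd x$ closes the estimate, with $D$ absorbed into the constant in the statement.

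The bounds on $\|\op{TA}f\|_0$ and $\|\op{AQ}f\|_0$ rest on a common pairing trick. For each one, set $u$ to be $\op{A}f$ (respectively $\op{AQ}f$) and pair the defining identity $(1+\op{B}^*\op{B})u=\op{B}^*f$ (respectively $\op{B}^*\op{Q}f$) with $u$ in $L^2(\rd\mu_0)$, producing the identity $\|u\|_0^2+\|\op{B}u\|_0^2=\langle f,\op{B}u\rangle_0$ (respectively $\langle\op{Q}f,\op{B}u\rangle_0$). For the first, $\op{\Pi T\Pi}=0$ replaces $f$ by $(1-\op{\Pi})f$ on the right and Cauchy--Schwarz gives $\|\op{TA}f\|_0=\|\op{B}u\|_0\leq\|(1-\op{\Pi})f\|_0$. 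For the second, I rely on the pointwise identity $\partial_{\alpha\alpha}\op{B}u=-\op{B}u$ (which follows from $\partial_{\alpha\alpha}\tau=-\tau$ and the fact that $\op{\Pi}u$ is $\alpha$-independent): two integrations by parts in $\alpha$ reduce $\langle\op{Q}f,\op{B}u\rangle_0$ to $-D\langle(1-\op{\Pi})f,\op{B}u\rangle_0$, and optimising a Young inequality yields the sharp constant $D/2$.

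The remaining bound $\|\op{AT}(1-\op{\Pi})f\|_0\leq C_V\|(1-\op{\Pi})f\|_0$ is where I expect the main difficulty, since its constant is the only one that genuinely depends on the potential. Setting $u:=\op{AT}(1-\op{\Pi})f$ and re-running the pairing trick, the task becomes to control $\langle\op{T}(1-\op{\Pi})f,\op{B}u\rangle_0$. Integrating by parts in $x$ and $\alpha$ to move $\op{T}$ onto $\op{B}u=e^{-V}\tau\cdot\nabla_x(e^V\op{\Pi}u)$ produces integrals of $(1-\op{\Pi})f$ against combinations of $\tau$, $\tau^\perp$, $\nabla_x\op{\Pi}u$, $(\nabla_x V)\op{\Pi}u$, and $(\nabla_x^2 V)\op{\Pi}u$. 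Hypothesis \textbf{(H4)} controls $\nabla_x^2V$ pointwise by $c_1(1+|\nabla_x V|)$, so after a final Cauchy--Schwarz the dangerous contributions are dominated by $C_V\|(1-\op{\Pi})f\|_0\cdot\|\op{B}u\|_0$, with $C_V$ depending on $c_1$ and on the Poincar\'e constant $\Lambda$ from \textbf{(H3)}. A Young inequality applied to the resulting bound on $\|u\|_0^2+\|\op{B}u\|_0^2$ then closes the estimate.
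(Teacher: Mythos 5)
Your treatment of the first three bounds is correct and is essentially the standard argument behind this lemma (which the paper itself does not prove but imports from \cite{Dolbeault2012short}): the Poincar\'e inequality on $\S^1$ for $\langle \op{Q}f,f\rangle_0$ (note it produces the constant $D$, which is what Step~1 of the paper actually uses; you cannot literally ``absorb'' $D$ if $D<1$, but that is a normalisation issue in the statement, not in your argument), and the pairing identity $\|u\|_0^2+\|\op{T\Pi}u\|_0^2=\langle\cdot\,,\op{T\Pi}u\rangle_0$ combined with $\op{\Pi T\Pi}=0$, respectively with $\partial_{\alpha\alpha}(\op{T\Pi}u)=-\op{T\Pi}u$ and Young's inequality, for $\|\op{TA}f\|_0\le\|(1-\op{\Pi})f\|_0$ and $\|\op{AQ}f\|_0\le\tfrac D2\|(1-\op{\Pi})f\|_0$.

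The fourth bound is where your proposal has a genuine gap. Set $u:=\op{AT}(1-\op{\Pi})f$ and $\bar u:=e^V\op{\Pi}u$. Your pairing gives $\|u\|_0^2+\|\op{T\Pi}u\|_0^2=\langle\op{T}(1-\op{\Pi})f,\op{T\Pi}u\rangle_0=-\langle(1-\op{\Pi})f,\op{T}(e^{-V}\tau\cdot\nabla_x\bar u)\rangle_0$, and a direct computation yields $\op{T}(e^{-V}\tau\cdot\nabla_x\bar u)=e^{-V}\bigl[\tau\cdot(\nabla_x^2\bar u\,\tau)-(\tau^\bot\cdot\nabla_x V)(\tau^\bot\cdot\nabla_x\bar u)\bigr]$: the terms are \emph{second} derivatives of $\bar u$ and $\nabla_xV$-weighted first derivatives (no Hessian of $V$ appears at this stage), whereas your list stops at first derivatives of $\op{\Pi}u$. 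Consequently the final Cauchy--Schwarz cannot be closed against $\|\op{T\Pi}u\|_0$, since that norm only controls $\|\nabla_x\bar u\|_{L^2(e^{-V}\rd x)}$ and not $\|\nabla_x^2\bar u\|_{L^2(e^{-V}\rd x)}$ nor $\|\,|\nabla_xV|\,\nabla_x\bar u\|_{L^2(e^{-V}\rd x)}$. Controlling those quantities is precisely the second-order elliptic regularity estimate for $w\mapsto e^{-V}w-\tfrac12\nabla_x\cdot(e^{-V}\nabla_x w)$ proved in \cite{Dolbeault2012short} (Proposition~5 and Sections~2--3), which uses \textbf{(H1--4)} through commutator and interpolation arguments and is the true origin of $C_V$; pointwise \textbf{(H4)} plus Cauchy--Schwarz does not suffice. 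Moreover, that regularity theory applies when the elliptic equation has an $L^2$ datum, which is why one bounds the \emph{adjoint} $(\op{AT}(1-\op{\Pi}))^*=(1-\op{\Pi})\op{T}^*\op{A}^*$, with $\op{A}^*f=e^{-V}\tau\cdot\nabla_x u_h$ and $h=(1+(\op{T\Pi})^*\op{T\Pi})^{-1}f$ solving the elliptic problem with right-hand side $\op{\Pi}f\in L^2$; in your direct pairing, $u$ solves the resolvent equation with right-hand side $\op{\Pi}\op{T}^*\op{T}(1-\op{\Pi})f$, which involves two derivatives of $f$ and is not controlled in $L^2$ by $\|(1-\op{\Pi})f\|_0$, so no $H^2$ bound on $\bar u$ is available along your route. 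This adjoint-plus-elliptic-regularity scheme is exactly how the present paper treats the analogous operator $\op{AP}_\kappa$ in Step~2 of Section~\ref{propproof}; to repair your proof you should do the same (or reprove the elliptic estimate), rather than expect the bound to follow from integration by parts and bookkeeping.
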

%%%%%%%%%%%%%%%%%%%%%%
%%%%%%%%%%%%%%%%%%%%%%
In order to control the contribution $\left\langle \op{A T \Pi}f,\op{ \Pi} f \right\rangle_0$ in $\op{D}_0$, we note that
$$\op{A T \Pi}=\left(1+(\op{T \Pi})^*\op{T \Pi}\right)^{-1}(\op{T\Pi})^*\op{T \Pi}$$ 
shares its spectral decomposition with $(\op{T\Pi})^*\op{T\Pi}$, and by \emph{macroscopic coercivity}
$$
\langle (\op{T\Pi})^*\op{T \Pi} f,f \rangle_0
= \left\|\op{T \Pi} f\right\|_0^2
= \left\|\op{T\Pi} (f - M_f e^{-V})\right\|_0^2 
\geq \frac{\Lambda}{2} \left\|\op{\Pi}(f - M_f e^{-V})\right\|_0^2\, .
$$
Hence,
\begin{equation*}\label{boundATPi}
\langle \op{AT\Pi} f, f\rangle_0 \geq \frac{\Lambda/2}{1 + \Lambda/2} \left\|\op{\Pi}(f - M_f e^{-V})\right\|_0^2 \,.
\end{equation*}
Now, recalling Lemma \ref{lem:estDol} and using $\left\|\op{\Pi}(f - M_f e^{-V})\right\|_0^2 = \left\|\op{\Pi}f\right\|_0^2 - M_f^2$, we estimate
\begin{align*}
\op{D}_0[f] \leq &(\eps_1 - D) \left\|(1-\op{\Pi})f \right\|_0^2 + \eps_1 \lambda_2\left\|(1-\op{\Pi})f\right\|_0 \|\op{\Pi}f\|_0
- \eps_1 \gamma_2  \left(\left\|\op{\Pi}f\right\|_0^2 - M_f^2 \right)\, ,
\end{align*}
with $\lambda_2:=C_V + D/2>0$ and $\gamma_2:=  \frac{\Lambda/2}{1 + \Lambda/2} > 0$.
%%%%%%%%%%%%%%%%%%%%%%%%%%%%%%
%%%%%%%%%%%%%%%%%%%%%%%%%%%%%%
%%%%%%%%%%%%%%%%%%%%%%%%%%%%%%
%%%%%%%%%%%%%%%%%%%%%%%%%%%%%%

\bigskip

\noindent
\textbf{ \# Step 2: Estimation of $\op{D}_1 [f]$.}

\bigskip

We now turn to the entropy dissipation functional $\op{D}_1$, which we
will estimate using elliptic regularity.  Instead of bounding
$\op{AP}_\kappa$, we apply an elliptic regularity strategy to its
adjoint, as for $\op{AT}(1-\op{\Pi})$ in
\cite{Dolbeault2012short}. Let $f \in L^2(\rd \mu_0)$ and define
$h := \left(1 + (\op{T\Pi})^*\op{T\Pi} \right)^{-1}f$ so that
$u_h = e^V \op{\Pi}h$ satisfies
\begin{equation*}%\label{ellipticeqn0}
\op{\Pi}f 
= e^{-V}u_h + \op{\Pi}\op{T}^* \op{T} \left( e^{-V}u_h \right)
= e^{-V}u_h - \frac{1}{2}\nabla_x \cdot \left( e^{-V} \nabla_x u_h\right).
\end{equation*}
We have used here the fact that in the space $L^2(\rd \mu_0)$: 
\begin{equation*}
  \left\{
    \begin{array}{l}
      \op{T}  = \tau \cdot \nabla_x - \partial_\alpha \left[ \left( \tau^\bot \cdot
    \nabla_x V\right)  \right]\, , \\[3mm]
  \op{T}^*   = - \tau \cdot \nabla_x   + \left( \tau^\bot \cdot
    \nabla_x V \right) \partial_\alpha   - \left( \tau \cdot \nabla_x
    V \right)  .
    \end{array}
  \right.
\end{equation*}
Then
$$
\op{A}^*f = \op{T\Pi} h = e^{-V} \tau \cdot \nabla_x u_h\, ,
$$
and since the adjoint for $\langle \cdot ,\cdot \rangle_0$ of the
perturbation operator $\op{P}_\kappa$ is given by
\begin{equation*}
\op{P}_\kappa^* = - \op{P}_\kappa - \op{P}_\kappa V \,,
\end{equation*}
it follows that
\begin{align*}
 \left\|(\op{AP}_\kappa)^* f\right\|_0^2 
 &= \left\| \kappa \,  \tau \cdot \nabla_x (e_1 \cdot \nabla_x u_h ) e^{-V} \right\|_0^2\\
 &= \frac{\kappa^2}{2} \int_{\RR^2 \times \S^1} e^{-V} |\tau \cdot \nabla_x\left( e_1 \cdot \nabla_x u_h \right)|^2 \, \rd\mu_0\\
 &= \frac{\kappa^2}{2} \int_{\RR^2} e^{-V} |\nabla_x\left( e_1 \cdot \nabla_x u_h \right)|^2 \, \rd x\\
 &\leq \frac{\kappa^2}{2} \left\|\nabla^2_x u_h \right\|^2_{L^2(e^{-V}\, \rd x)}\\
 &\leq \frac{\kappa^2}{2} C_V^2 \left\|\op{\Pi}f\right\|_0^2\, ,
\end{align*}
where in the last inequality we have used an elliptic regularity
estimate. This estimate turns out to be a particular case of
\cite[Proposition 5 and Sections 2-3]{Dolbeault2012short}, where the
positive constant $C_V$ is the same as in Lemma \ref{lem:estDol} reproduced from \cite{Dolbeault2012short}.  This
concludes the boundedness of $\op{AP}_\kappa$,
\begin{equation} \label{boundAP} 
  \left\|\op{AP}_\kappa f \right\|_0 \leq
  \kappa \frac{C_V}{\sqrt{2}}\, \left\|\op{\Pi}f\right\|_0
  \leq \kappa \frac{C_V}{\sqrt{2}}\, \left\|f\right\|_0\, .
\end{equation}

%%%%%%%%%%%%%%%%%%%%%%%%%%%%%boundP^*A

Using a similar approach for the operator $\op{P}^*_\kappa \op{A}$, we
rewrite its adjoint as
$$
\op{A}^*\op{P}_\kappa f = \op{T \Pi} \tilde h\, ,
$$
where we define
$\tilde h := (1+(\op{T\Pi})^*\op{T\Pi})^{-1}\op{P}_\kappa f$ for a
given $f \in L^2(\rd \mu_0)$, or equivalently
$$
e^{-V}u_{\tilde h} - \frac{1}{2} \nabla_x \cdot \left(e^{-V}\nabla_x
  u_{\tilde h} \right) = \op{\Pi P}_\kappa f = \op{P}_\kappa \op{\Pi}
f\, .
$$
Multiplying by $u_{\tilde h}$ and integrating over $\RR^2$, we have
\begin{align*}
 \left\|u_{\tilde h}\right\|^2_{L^2 (e^{-V}\, \rd x)} + \frac{1}{2} \left\|\nabla_x u_{\tilde h} \right\|^2_{L^2 (e^{-V}\, \rd x)}
 &= -\kappa \int_{\RR^2} \v{e}_1 \cdot \nabla_x \left(\op{\Pi}f\right)u_{\tilde h} \,\rd x\\
 &= \kappa \int_{\RR^2}\left(\op{\Pi}f \right) \v{e}_1 \cdot \nabla_x u_{\tilde h} \,\rd x\\
 &\leq \kappa \int_{\RR^2} \left| \nabla_x u_{\tilde h} e^{-V/2} \right|  \left|\op{\Pi}f e^{V/2}\right| \, \rd x\\
 &\leq \kappa \left\|\nabla_x u_{\tilde h} \right\|_{L^2(e^{-V}\, \rd x)} \left\|\op{\Pi}f\right\|_0\\
 &\leq \frac{1}{4} \left\|\nabla_x u_{\tilde h} \right\|^2_{L^2(e^{-V}\, \rd x)} + \kappa^2 \left\|\op{\Pi}f\right\|_0^2\, .
\end{align*}
This inequality is a
$H^1(e^{-V} \,\rd x) \rightarrow H^{-1}(e^{-V}\, \rd x)$ elliptic
regularity result.  Hence,
\begin{equation*}
 \left\|\op{A}^*\op{P}_\kappa f\right\|_0^2 = \left\|\op{T \Pi} h\right\|_0^2 
 = \frac{1}{2} \left\|\nabla_x u_{\tilde h} \right\|^2_{L^2 (e^{-V}\, \rd x)}
 \leq 2 \kappa^2 \left\|\op{\Pi}f\right\|_0^2 \, ,
\end{equation*}
and so we conclude
\begin{equation}\label{boundP*A}
 \left\|\op{P}^*_\kappa\op{A} f\right\|_0 
%  =\left\|\op{P}^*_\kappa\op{A} (1-\op{\Pi})f\right\|_0
 \leq \sqrt{2} \kappa \left\|(1-\Pi)f\right\|_0 \le \sqrt{2} \kappa \left\|f\right\|_0 \, .
\end{equation}

%%%%%%%%%%%%%%%%%%%%%%%
%%%%%%%%%%%%%%%%%%%%%%%
Combining (\ref{boundAP}) and (\ref{boundP*A}), the entropy
dissipation functional $\op{D}_1$ is bounded by
\begin{align*}
  \op{D}_1 [f] 
  \leq \kappa \eps_1 \left(\frac{C_V}{\sqrt{2}} + \sqrt{2} \right)\, \left\|f \right\|_0^2
  = 2 \kappa \lambda_1 \, \|f\|_0^2, \notag
\end{align*}
where we defined $\lambda_1 := \frac{1}{2}\left(\frac{C_V}{\sqrt{2}} + \sqrt{2} \right)$.
%
%%%%%%%%%%%%%%%%%%%%%%%%%%%%%
%%%%%%%%%%%%%%%%%%%%%%%%%%%%%
%%%%%%%%%%%%%%%%%%%%%%%%%%%%%
\bigskip

\noindent
\textbf{\# Step 3: Estimation of $\op{D}_2[f]$.}

\bigskip

Using integration by parts, we have 
\begin{align*}
 \langle \op{P}_\kappa f, f \rangle_0 =
                                        \frac{\kappa}{2}
                                        \int_{\RR^2 \times \S^1}
                                        \left(e_1 \cdot \nabla_x
                                        V\right) f^2 e^V \,\frac{\rd x \, \rd \alpha}{2 \pi} \, .
\end{align*}
The estimation of this term goes differently depending on the boundedness of $\nabla_x V$. 

If $\nabla_x V$ is bounded, we write
\begin{equation*}
\op{D}_2 [f] \leq
\vert \langle \op{P}_\kappa f, f \rangle_0 \vert \leq \frac{\kappa}{2}  \Vert \nabla_x V \Vert_\infty \|f\|_0^2 = \frac{\kappa}{2}  \Vert \nabla_x V \Vert_\infty \|f\|_\kappa^2,
\end{equation*}
where we have used $\|f\|_\kappa = \|f\|_0$, since $g \equiv 0$.

Assume now that  $ \vert \nabla_x V \vert \to \infty$ as $\vert x \vert \to\infty$. Thanks to the choice of $g$, we have the estimate
\begin{equation}\label{gradVbound}
\op{D}_2 [f] \leq
\vert \langle \op{P}_\kappa f, f \rangle_0 \vert \leq \frac{\kappa}{2} \int_{\RR^2 \times \S^1} |\nabla_x V| f^2 e^V \,\frac{\rd x \, \rd \alpha}{2 \pi}
 \leq \frac{\kappa}{2}  C_3  \int_{\RR^2 \times \S^1} f^2 g \,\frac{\rd x \, \rd \alpha}{2 \pi},
\end{equation}
with
$$
C_3 := \sup_{x \in \RR^2} \left( |\nabla_x V| e^{V} g^{-1}\right),
$$
which is finite by \textbf{(H5)}.

\bigskip

\noindent
\textbf{\# Step 4: Estimation of $\op{D}_3[f]$.}

\bigskip

We start by recalling that this estimate is only relevant when $\nabla_x V$ is unbounded. Indeed, in the opposite case, $\op{D}_3[f] = 0$ since $g \equiv 0$ by definition. 
By the identity 
\begin{equation*}%\label{Lidentity}
 \int_{\RR^2\times\S^1} \op{L}_\kappa(f) f g\, \rd x \, \rd \alpha
= \frac{1}{2}\int_{\RR^2\times\S^1} \mathcal{L}_\kappa(g) f^2\, \rd x \,\rd \alpha
-D\int_{\RR^2\times\S^1} \left\vert \partial_\alpha f\right \vert^2 g\, \rd x \,\rd \alpha
\end{equation*}
with $\mathcal{L}_\kappa$ as defined in \eqref{weightop},
we have
\begin{align}
 \op{D}_3 [f] \leq  \kappa\zeta \left( \frac{1}{2}\int_{\RR^2\times\S^1} \mathcal{L}_\kappa(g) f^2\, \frac{\rd x \, \rd \alpha}{2\pi} 
\right)\label{D2unbounded}.
\end{align}
Proposition \ref{Lyapunovprop} allows us to control the $g$-weighted
$L^2$-norm outside some fixed ball. More precisely, take $R>0$ in
\eqref{Lyapunovcond} large enough s.t.  $|\nabla_x V|\geq 1$ for all
$|x|> R$, then
\begin{align}
&\int_{\RR^2\times\S^1} \mathcal{L}_\kappa(g) f^2\, \frac{\rd x \, \rd \alpha}{2\pi} \notag\\
&\qquad  \leq \int_{\S^1}\int_{|x|<R}
        \mathcal{L}_\kappa(g) f^2 \,\frac{\rd x \, \rd \alpha}{2\pi} -  
        c\int_{\S^1}\int_{|x|>R}
         \vert \nabla_x V \vert f^2 g\,\frac{\rd x \, \rd \alpha}{2\pi}\notag\\
&\qquad \leq \int_{\S^1}\int_{|x|<R} \left( \left(
        \mathcal{L}_\kappa(g) + c g \right) e^{-V}\right) f^2 e^V \,\frac{\rd x \, \rd \alpha}{2\pi} -c \int_{\RR^2 \times \S^1} f^2 g \, \frac{\rd x \, \rd \alpha}{2\pi}\notag\\
&\qquad  \leq  C_4(R) \Vert f \Vert_0^2 - c\int_{\RR^2 \times \S^1}f^2 g \, \frac{\rd x \, \rd \alpha}{2\pi}\, ,\label{dtgbound}
\end{align}
where
$C_4(R):=\sup_{\vert x \vert \leq R}\left( \vert
  \mathcal{L}_\kappa(g) + cg  \vert e^{-V} \right) $.

\begin{remark}
  Observe here that one could take advantage of the growth of
  $\nabla_x V$ 
%   (recall we are in the case where it goes to infinity at
%   infinity) 
  by playing with the cut-off parameter $R$ and keeping track
  of $\min_{|x| \ge R} |\nabla_x V|$ in the negative term. It could
  lead to more optimal constants but we chose instead to vary the
  parameter $\zeta$ in front of the coercivity weight $g$ in the
  measure $\mu_\kappa$ for simplicity.
\end{remark}

%%%%%%%%%%%%%%%%%%%%%%%%%
%%%%%%%%%%%%%%%%%%%%%%%%%
%%%%%%%%%%%%%%%%%%%%%%%%%
%%%%%%%%%%%%%%%%%%%%%%%%%
%%%%%%%%%%%%%%%%%%%%%%%%%

\bigskip

\noindent
\textbf{\# Step 5: Putting the four previous steps together.}

\bigskip
Combine the previous steps into
\begin{align*}
\op{D}_0[f] + \op{D}_1 [f] 
\leq &(\eps_1 - D) \left\|(1-\op{\Pi})f \right\|_0^2 + \eps_1
       \lambda_2 \left\|(1-\op{\Pi})f\right\|_0
       \|\op{\Pi}f\|_0 \\&- \eps_1 \gamma_2  \left(\left\|\op{\Pi}f\right\|_0^2 - M_f^2 \right) 
    +2 \kappa \lambda_1 \, \|f\|_0^2\\
= &- (D - \eps_1-2\kappa \lambda_1) \left\|(1-\op{\Pi})f \right\|_0^2
    + \eps_1 \lambda_2
    \left\|(1-\op{\Pi})f\right\|_0 \|\op{\Pi}f\|_0 \\&- \left( \eps_1 \gamma_2 - 2 \kappa \lambda_1 \right) \left\|\op{\Pi}f\right\|_0^2 
+ \eps_1 \gamma_2 M_f^2 \\
\leq &- \left(D - \eps_1-2\kappa \lambda_1- \frac{\eps_1 \lambda_2 b}{2} \right) \left\|(1-\op{\Pi})f \right\|_0^2\\
%     + \eps_1 \lambda_2
%     \left\|(1-\op{\Pi})f\right\|_0 \|\op{\Pi}f\|_0 \\
    &- \left( \eps_1 \gamma_2 - 2 \kappa \lambda_1 -\frac{\eps_1 \lambda_2}{2b} \right) \left\|\op{\Pi}f\right\|_0^2 
+ \eps_1 \gamma_2 M_f^2 \\
\leq &- 2\xi(\kappa)  \| f\|_0^2 + \eps_1 \gamma_2 M_f^2\,, 
\end{align*}
by Young's inequality with the choice $b=\lambda_2/\gamma_2$, and 
where we used the fact that $\|(1-\op{\Pi})f\|_0^2+\|\op{\Pi}f\|_0^2=\|f\|_0^2$. Here, $\xi(\kappa)$ is explicit, and given by
\begin{align*}
\xi(\kappa) := &\frac12 \min \left\{ D - \eps_1\left(1+\frac{\lambda_2^2}{2 \gamma_2}\right) \ , \  \frac{\eps_1 \gamma_2}2 \right\}
                       -\kappa\lambda_1 \\
 = & \frac{D\gamma_2^2}{2\left(\gamma_2^2+2\gamma_2+\lambda_2^2\right)} -\kappa\lambda_1\, ,
\end{align*}
since the minimum in the first term is realised when the two arguments
are equal, fixing
$\eps_1=2D\gamma_2/\left(\gamma_2^2+2\gamma_2+\lambda_2^2\right)$. Note
that this choice of $\eps_1$ satisfies $\eps_1<D$ and
$\eps_1<1$. Choosing $\kappa$ small enough ensures $\xi(\kappa)>0$.
% Ensuring $\xi(\kappa)>0$ imposes the
% restriction 
% \begin{equation*}
% 0 \leq \kappa < \frac{D\gamma_2^2}{2\lambda_1\left(\gamma_2^2+2\gamma_2+\lambda_2^2\right)}\, .
% \end{equation*}
% % which is non-empty if the right-hand side is positive, satisfied if
% % $\eps_1$ small enough e.g. 
% % \begin{equation}\label{eps1restriction}
% % \eps_1 < \frac{2 \gamma_2 D}{\lambda_2^2} < D.
% % % \frac{\left(\frac{\Lambda}{2+\Lambda}\right)D}{\left(\frac{\Lambda}{2+\Lambda}\right) + \left( C_V + \frac{D}{2}\right)^2}
% % % < D\, .
% % \end{equation}
From this analysis we conclude
\begin{align} \label{D1D2}
\op{D}_0[f] + \op{D}_1[f]
\leq - 2 \xi(\kappa) \|f\|_0^2 + \eps_1 \gamma_2 M_f^2\, .
\end{align}

Let us now add the control of $\op{D}_2 + \op{D}_3$. 
If $\nabla_x V$ is bounded, $g \equiv 0$ and $\op{D}_3=0$: 
\begin{align*}
 \frac{\rd}{\rd t} \op{G}[f] &= \op{D}_0[f] + \op{D}_1[f] + \op{D}_2[f] \\
 &\leq  - \left(4 \xi(\kappa) - \kappa   \Vert \nabla_x V
   \Vert_\infty\right) \frac12 \|f\|_\kappa^2 + \eps_1 \gamma_2 M_f^2 \\
&\leq - \gamma_1 \op{G}[f] + \eps_1 \gamma_2 M_f^2
\end{align*}
by the norm equivalence (\ref{normequiv}). Here, we defined
\begin{equation*}
 \gamma_1:= \frac{4\xi(\kappa) - \kappa   \Vert \nabla_x V \Vert_\infty}{1+\eps_1} >0\,.
\end{equation*}
% For our analysis to work, we have to impose that the movement of the
% conveyor belt is slow enough with respect to the speed at which the
% fibres are produced, see Remark \ref{rmk:krestriction}.

When $\nabla_x V$ is unbounded,  \eqref{gradVbound}-\eqref{D2unbounded}-\eqref{dtgbound}-\eqref{D1D2} imply
\begin{align*}
\frac{\rd}{\rd t} \op{G}[f] = &\op{D}_0[f] + \op{D}_1[f] + \op{D}_2[f] + \op{D}_3[f]\notag\\
\leq & 
- 2\xi(\kappa)  \| f\|_0^2 +\eps_1 \gamma_2 M_f^2 
+ \frac{\kappa}{2}  C_3  \int_{\RR^2 \times \S^1} f^2 g \,\frac{\rd x \, \rd \alpha}{2 \pi}\\
&\quad +\frac{\kappa\zeta}{2} \left( C_4(R) \Vert f \Vert_0^2 - c\int_{\RR^2 \times \S^1}f^2 g \, \frac{\rd x \, \rd \alpha}{2\pi} \right)\\
= &
- \frac12 \left(4 \xi(\kappa) - \kappa\zeta
       C_4(R)\right)\|f\|_0^2 - \frac{\kappa\zeta}{2} \left(c - \frac{
       C_3}{\zeta} \right) \int_{\RR^2 \times \S^1} f^2 g\,\frac{\rd
    x \, \rd \alpha}{2\pi} + \eps_1 \gamma_2 M_f^2 \notag\\
\leq &- \frac12 \min\left\{ 4 \xi(\kappa) - \kappa\zeta
       C_4(R),c - \frac{
       C_3}{\zeta}\right\} \Vert f \Vert_\kappa^2 + \eps_1 \gamma_2 M_f^2\notag \\       
\leq &- \gamma_1\, \op{G}[f]  + \eps_1 \gamma_2 M_f^2
\end{align*}
again by norm equivalence \eqref{normequiv}, and where we defined
\begin{align*}
  &\gamma_1:= \frac{1}{1+\eps_1}\min\left\{ 4 \xi(\kappa) - \kappa\zeta C_4(R),c - \frac{C_3}{\zeta}\right\}>0\,.
\end{align*}
This requires $\zeta>0$ to be large enough, and the upper bound
for $\kappa$ should be chosen accordingly:
\begin{equation*}
\zeta > \frac{C_3}{c}\, ,
\qquad
4 \xi(\kappa) - \kappa\zeta
       C_4(R) > 0\, .
\end{equation*}
In order to maximise the rate of convergence to equilibrium given $\kappa$, $D$ and $V$, one
can optimise $\gamma_1$ over $\zeta$ whilst respecting the above constraints.

\begin{remark}\label{rmk:krestriction}
  The condition $\gamma_1>0$ translates into an explicit upper bound
  on $\kappa$. More precisely, we require
  $\xi(\kappa)>\kappa u/4$ where
  $u:=\Vert \nabla_x V \Vert_\infty$ in the case of a bounded
  potential gradient, and $u:=\zeta C_4(R)$ otherwise.  This
  condition is satisfied for small enough $\kappa$:
 \begin{equation*}
  0\leq \kappa< \frac{\eps_1\gamma_2}{(4\lambda_1+u)}
  =\frac{2D\gamma_2^2}{(4\lambda_1+u)(\gamma_2^2+2\gamma_2+\lambda_2^2)}
 \end{equation*}
which also implies $\xi(\kappa)>0$. Recall that Proposition
 \ref{Lyapunovprop} requires $\kappa<1/3$ in the case of unbounded
 potential gradients. These conditions provide a range of $\kappa$
 for which Proposition \ref{hypoestimate} holds.  
\end{remark}

%%%%%%%%%%%%%%%%%%%%%%%%%%%%%%%%%%%%%%%%%%%%%%%%%%%%%%%%%
%%%%%%%%%%%%%%%%%%%%%%%%%%%%%%%%%%%%%%%%%%%%%%%%%%%%%%%%
%%%%%%%%%%%%%%%%%%%%%%%%%%%%%%%%%%%%%%%%%%%%%%%%%%%%%%%%%%%
%%%%%%%%%%%%%%%%%%%%%%%%%%%%%%%%%%%%%%%%%%%%%%%%%%%%%%%%%
\section{The coercivity weight $g$}\label{sec:weightg}
%%%%%%%%%%%%%%%%%%%%%%%%%%%%%%%%%%%%%%%%%%%%%%%%%%%%%%%%%
%%%%%%%%%%%%%%%%%%%%%%%%%%%%%%%%%%%%%%%%%%%%%%%%%%%%%%%%
%%%%%%%%%%%%%%%%%%%%%%%%%%%%%%%%%%%%%%%%%%%%%%%%%%%%%%%%
%%%%%%%%%%%%%%%%%%%%%%%%%%%%%%%%%%%%%%%%%%%%%%%%%%%%%%%%

In this section, we define the function $g$ in such a way that it allows us to control the loss of weight in the perturbation operator $\op{P}_\kappa$. When $\nabla_x V$ is bounded, we do not need any extra weight since then we may control the perturbation thanks to the stationary weight $e^V$, and so we set $g\equiv 0$ in that case. When it is not, Proposition \ref{Lyapunovprop} provides a suitable weight function $g$ by constructive methods.

\subsection{Proof of Proposition \ref{Lyapunovprop}}
%%%%%%%%%%%%%%%%%%
%%%%%%%%%%%%%%%%%%
%%%%%%%%%%%%%%%%%%
The proof is strongly inspired from \cite{Kolb2013}, however our weight is different since we work in an $L^2$-framework rather than in an $L^1$ one. 
%%%%%%%%%%%%begin of proof
Assuming $\nabla_x V$ is unbounded, we seek a weight $g$ of the form
\begin{equation*}
g(x, \alpha)=\exp\left(\beta V(x) + |\nabla_x V (x)|\Gamma\left(\tau(\alpha)\cdot\frac{\nabla_x V(x)}{|\nabla_x V(x)|}\right)\right)\, ,
\end{equation*}
where the parameter $\beta>1$ and the function $\Gamma \in C^1\left([-1,1]\right)$, $\Gamma>0$ are to be determined. We define
$$
Y(x,\alpha) :=  \tau(\alpha) \cdot \dfrac{\nabla_x V(x)}{\vert \nabla_x V(x) \vert}\, , \qquad 
Y^\bot (x,\alpha):=  \tau^\bot(\alpha) \cdot \dfrac{\nabla_x V(x)}{\vert \nabla_x V(x) \vert}\, ,
$$ 
and split the proof into four steps: 1) we rewrite statement
\eqref{Lyapunovcond} using the explicit expression of the weight $g$,
2) we simplify the obtained expression using assumption \textbf{(H5)},
3) we prove the equivalent statement obtained in Step 2 by defining a
suitable choice of $\Gamma(\cdot)$ and $\beta$, and 4) we demonstrate that
it is indeed possible to choose suitable parameters for the
calculations in Step~3 to hold, fixing explicit expressions where
possible.

\bigskip

\noindent
\textbf{\# Step 1: Rewriting the weight estimate \eqref{Lyapunovcond}.}

\bigskip

Applying the operator $\mathcal{L}_\kappa$ defined in \eqref{weightop} to $g$, we can compute explicitly
\begin{align*}
 \frac{\mathcal{L}_\kappa(g)}{g}
 =  &D\left( |\nabla_x V|\partial_{\alpha \alpha} \Gamma(Y) + |\nabla_x V|^2\vert \partial_\alpha \Gamma(Y) \vert^2 \right)\\
 &+ \left(\tau(\alpha)+\kappa e_1\right) \cdot \left( \beta \nabla_x V + \nabla_x \left( |\nabla_x V|\Gamma(Y) \right) \right) \\
 &- \vert \nabla_x V \vert^2 Y^\bot  \partial_\alpha \Gamma(Y) - \vert \nabla_x V \vert Y\, .
 \end{align*}
 Since
 \begin{equation*}
\partial_{\alpha} \Gamma = Y^\bot \Gamma'(Y)
\quad \text{and} \quad
\partial_{\alpha \alpha} \Gamma 
=  \partial_\alpha \left( Y^\bot \Gamma'(Y) \right) 
=   - Y \Gamma'(Y) + \vert Y^\bot \vert^2 \Gamma''(Y)\, ,
\end{equation*}
we get 
 \begin{align*}
 \frac{\mathcal{L}_\kappa(g)}{g} =&  D \left( \vert \nabla_x V \vert \left( - Y \Gamma'(Y) + \vert Y^\bot \vert^2 \Gamma''(Y) \right) + \vert \nabla_x V \vert^2 \vert Y^\bot \vert^2 \left(  \Gamma'(Y) \right)^2 \right) \\
 &+\left(\tau(\alpha)+\kappa e_1\right) \cdot \left( \beta \nabla_x V + \nabla_x \left( |\nabla_xV|\Gamma(Y) \right) \right)\\
 &- \vert \nabla_x V \vert^2 \vert Y^\bot \vert^2  \Gamma'(Y) - \vert \nabla_x V \vert Y\\
 =& (\beta - 1 -  D\Gamma'(Y) ) \vert \nabla_x V \vert Y + \kappa \beta e_1 \cdot \nabla_x V + \left(\tau(\alpha)+\kappa e_1\right) \cdot \nabla_x \left( |\nabla_xV|\Gamma(Y) \right) \\
 & +\vert Y^\bot \vert^2 \left(  D \vert \nabla_x V \vert
   \Gamma''(Y)  + \vert \nabla_x V \vert^2  \left[ D \left(  \Gamma'(Y) \right)^2 -  \Gamma'(Y)  \right] \right)\, .
\end{align*}
In order to see which $\Gamma$ to choose, let us divide by $|\nabla_x V|$ and denote the diffusion and transport part by
$$
\text{diff}(x, \alpha):= \left(\tau(\alpha)+\kappa e_1\right) \cdot \frac{\nabla_x \left( |\nabla_x V| \Gamma(Y) \right)}{|\nabla_x V|}\, , \qquad
\text{tran}(x):= \frac{e_1\cdot \nabla_x V}{|\nabla_x V|}\, .
$$
Now, we can rewrite the statement of Proposition \ref{Lyapunovprop}:
we seek a positive constant $c>0$ and a radius $R>0$ such that for
any $\alpha \in \S^1$ and $|x|>R$,
\begin{align*}
  (\beta - 1 - D\Gamma'(Y) ) &Y + \kappa \beta \text{tran}(x) +
  \text{diff}(x, \alpha) \\
  &+\vert Y^\bot \vert^2 \left( D\Gamma''(Y) +
    \vert \nabla_x V \vert \left[ D\left( \Gamma'(Y) \right)^2 -
      \Gamma'(Y) \right] \right) \leq -c\, .
\end{align*}
To achieve this bound, note that $|Y| \leq 1$ and
$|\text{tran}|\leq 1$ for all $(x, \alpha) \in \RR^2 \times
\S^1$.

\bigskip

\noindent
\textbf{\# Step 2: Simplifying the weight estimate.}

\bigskip

Further, the diffusion term $\text{diff}(\cdot)$ can be made
arbitrarily small outside a sufficiently large ball. Indeed, 
\begin{align*}
\text{diff}(x,\alpha)
  & =\left(\tau+\kappa e_1\right)\cdot \left[ \Gamma'\left( Y \right) \nabla_x  Y
    + \Gamma\left(Y\right)\dfrac{\nabla_x(\vert \nabla_x V \vert)}{\vert \nabla_x V \vert}\right]\, ,
\end{align*}
and both $|\nabla_x Y|$ and
$|\nabla_x(\vert \nabla_x V \vert)|/ |\nabla_x V|$ converge to zero as
$|x| \to \infty$ by assumption \textbf{(H5)}, and $\Gamma$ is
bounded. In other words, using the fact that the potential gradient is
unbounded, it remains to show that we can find constants
$\gamma>\kappa\beta>0$ and a radius $r_1>0$ such that
\begin{equation} \label{boundgamma}
\forall |x|>r_1, \quad
(\beta - 1 - D \Gamma' ) Y
+\vert Y^\bot \vert^2 \left( D\Gamma'' + \vert \nabla_x V \vert  \left[D \left(  \Gamma' \right)^2 -  \Gamma'  \right] \right)  \leq - \gamma\, .
\end{equation}
Then we can choose $r_2>0$ such that
$$
|x|>r_2 \quad \Longrightarrow \quad \forall \alpha \in \S^1, \quad \text{diff}(x,\alpha) \leq \frac{\gamma-\kappa\beta}{2} \, ,
$$
and we conclude for the statement of Proposition \ref{Lyapunovprop}
with $R:=\max\{r_1,r_2\}$ and $c:=(\gamma-\kappa\beta)/2>0$.

\bigskip

\noindent
\textbf{\# Step 3: Proof of the weight estimate.}

\bigskip

Proving \eqref{boundgamma} can be done by an explicit construction. We
define $\Gamma'\in C^0([-1,1])$ piecewise,
\begin{multicols}{2}
\vfill
 \begin{equation*}
 \Gamma'(Y)=
 \begin{cases}
  \delta^+	&\mbox{if } Y> \eps_0\, , \\[2mm]
  \frac{\delta^+ - \delta^-}{2 \eps_0} \left( Y + \eps_0 \right) + \delta^-
		&\mbox{if } |Y|\leq \eps_0\, , \\[2mm]
   \delta^-	&\mbox{if } Y<-\eps_0\, ,
 \end{cases}
\end{equation*}
\vfill\null
\columnbreak
\hspace{0.7cm}
\begin{center}
\mbox{} \hspace{0.2cm}
% \vspace{-1cm}
\def\svgwidth{140pt}
 %% Creator: Inkscape inkscape 0.48.4, www.inkscape.org
%% PDF/EPS/PS + LaTeX output extension by Johan Engelen, 2010
%% Accompanies image file 'gammaprimeplot.pdf' (pdf, eps, ps)
%%
%% To include the image in your LaTeX document, write
%%   \input{<filename>.pdf_tex}
%%  instead of
%%   \includegraphics{<filename>.pdf}
%% To scale the image, write
%%   \def\svgwidth{<desired width>}
%%   \input{<filename>.pdf_tex}
%%  instead of
%%   \includegraphics[width=<desired width>]{<filename>.pdf}
%%
%% Images with a different path to the parent latex file can
%% be accessed with the `import' package (which may need to be
%% installed) using
%%   \usepackage{import}
%% in the preamble, and then including the image with
%%   \import{<path to file>}{<filename>.pdf_tex}
%% Alternatively, one can specify
%%   \graphicspath{{<path to file>/}}
%% 
%% For more information, please see info/svg-inkscape on CTAN:
%%   http://tug.ctan.org/tex-archive/info/svg-inkscape
%%
\begingroup%
  \makeatletter%
  \providecommand\color[2][]{%
    \errmessage{(Inkscape) Color is used for the text in Inkscape, but the package 'color.sty' is not loaded}%
    \renewcommand\color[2][]{}%
  }%
  \providecommand\transparent[1]{%
    \errmessage{(Inkscape) Transparency is used (non-zero) for the text in Inkscape, but the package 'transparent.sty' is not loaded}%
    \renewcommand\transparent[1]{}%
  }%
  \providecommand\rotatebox[2]{#2}%
  \ifx\svgwidth\undefined%
    \setlength{\unitlength}{336.35bp}%
    \ifx\svgscale\undefined%
      \relax%
    \else%
      \setlength{\unitlength}{\unitlength * \real{\svgscale}}%
    \fi%
  \else%
    \setlength{\unitlength}{\svgwidth}%
  \fi%
  \global\let\svgwidth\undefined%
  \global\let\svgscale\undefined%
  \makeatother%
  \begin{picture}(1,0.73473967)%
    \put(0,0){\includegraphics[width=\unitlength]{gammaprimeplot.pdf}}%
    \put(0.14301509,0.02786811){\color[rgb]{0,0,0}\makebox(0,0)[b]{\smash{-1}}}%
    \put(0.80810772,0.02548963){\color[rgb]{0,0,0}\makebox(0,0)[b]{\smash{+1}}}%
    \put(0.34084194,0.01121878){\color[rgb]{0,0,0}\makebox(0,0)[b]{\smash{$-\varepsilon_0$}}}%
    \put(0.63523395,0.01121878){\color[rgb]{0,0,0}\makebox(0,0)[b]{\smash{$\varepsilon_0$}}}%
    \put(0.50661513,0.53686171){\color[rgb]{0,0,0}\makebox(0,0)[lb]{\smash{$\frac{1}{D}$}}}%
    \put(0.50789496,0.44172272){\color[rgb]{0,0,0}\makebox(0,0)[lb]{\smash{$\delta^+$}}}%
    \put(0.50661513,0.1610627){\color[rgb]{0,0,0}\makebox(0,0)[lb]{\smash{$\delta^-$}}}%
    \put(0.42964685,0.698598){\color[rgb]{0,0,0}\makebox(0,0)[b]{\smash{$\Gamma'(Y)$}}}%
    \put(0.98539003,0.03976048){\color[rgb]{0,0,0}\makebox(0,0)[b]{\smash{Y}}}%
  \end{picture}%
\endgroup%

 \vspace{-0.2cm}
 \captionof{figure}{Derivative of $\Gamma$\vspace{-0.4cm}}
\label{fighypo-intro}
\end{center}
\end{multicols}
% \begin{equation*}
%  \Gamma'(Y)=
%  \begin{cases}
%   \delta^+	&\mbox{if } Y> \eps_0, \\[2mm]
%   \frac{\delta^+ - \delta^-}{2 \eps_0} \left( Y + \eps_0 \right) + \delta^-
% 		&\mbox{if } |Y|\leq \eps_0, \\[2mm]
%    \delta^-	&\mbox{if } Y<-\eps_0,
%  \end{cases}
% \end{equation*}
% \begin{center}
% \def\svgwidth{250pt}
%  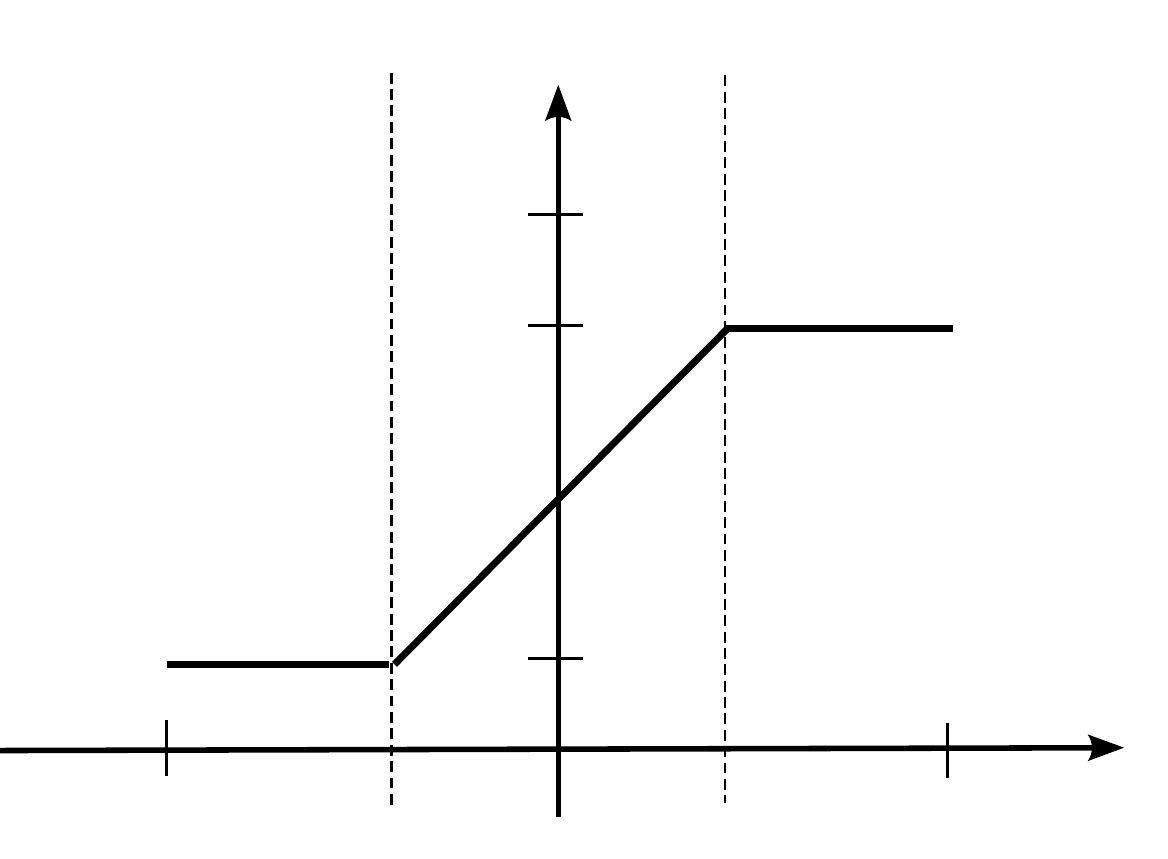
%  \captionof{figure}{Derivative of $\Gamma$}
% \label{fig}
% \end{center}
\noindent
where $0<\delta^-<\delta^+<1/D$ and $\eps_0 \in (0,1)$ are to be
determined. With this choice of $\Gamma'$, we can ensure that $\Gamma$
is strictly positive in the interval $[-1,1]$. Now, let us show that
there exist suitable choices of $\gamma$ and $\beta$ for the bound
(\ref{boundgamma}) to hold. More precisely, we choose a suitable
$\beta$ such that $(\beta-1)/D \in (\delta^-, \delta^+)$ and
$0<\gamma < \tilde \gamma$, defining
$\gamma:=\eps_0\left(1+D\delta^+-\beta\right)$ and
$\tilde\gamma:=\eps_0\left(\beta - 1-D\delta^-\right)$. We split our
analysis into cases:
\begin{itemize}
\item Assume $Y > \eps_0$. Then the LHS of (\ref{boundgamma}) can be bounded as follows:
\begin{equation*}
( \beta -1 - D\delta^+ ) Y + \delta^+ \left(D \delta^+ - 1\right) \vert \nabla_x V \vert \vert Y^\bot \vert^2   <  ( \beta -1 - D\delta^+ ) \eps_0 = - \gamma\, .
\end{equation*}
\item Assume $Y < -\eps_0$. Then the LHS of (\ref{boundgamma}) can be bounded as follows:
\begin{equation*}
( \beta -1 - D\delta^- ) Y + \delta^- \left(D \delta^- - 1\right) \vert \nabla_x V \vert \vert Y^\bot \vert^2   <  -( \beta -1 - D\delta^- ) \eps_0 = - \tilde\gamma\, .
\end{equation*}
\item Assume $\vert Y \vert \leq \eps_0$. Since $1 = \vert Y \vert^2 + \vert Y^\bot \vert^2$, we have $\vert Y^\bot \vert^2 \geq 1 - \eps_0^2$. Further, setting 
$$
h=aY+b \in (\delta^-, \delta^+)\, , \quad
a:=\frac{\delta^+-\delta^-}{2\eps_0}\, , \quad
b:=\frac{\delta^++\delta^-}{2}\, ,
$$
we have $\Gamma'=h$ and $Dh^2-h\leq D \delta^-(\delta^+-1/D)$. Now,
using the fact that the potential gradient is unbounded, we can find a
radius $r_1>0$ large enough such that for all $|x|>r_1$,
\begin{equation*}
  \frac{D(\delta^+ - \delta^-)}{2 \eps_0} -   D
  \delta^-\left(\frac{1}{D}-\delta^+\right)\vert \nabla_x V \vert  < -\frac{2\tilde\gamma}{(1-\eps_0^2)}\, .
\end{equation*}
Putting these estimates together, we obtain for $|x|>r_1$:
\begin{align*}
&(\beta - 1 - Dh) Y 
+\vert Y^\bot \vert^2 \left( \frac{D(\delta^+ - \delta^-)}{2 \eps_0} + \vert \nabla_x V \vert  \left[D h^2 -  h  \right] \right) \\
&\leq (\beta - 1 - D\delta^-) \eps_0 
+\vert Y^\bot \vert^2 \left( \frac{D(\delta^+ - \delta^-)}{2 \eps_0} + \vert \nabla_x V \vert  \left[D \delta^-\left(\delta^+-\frac{1}{D}\right)\right] \right) \\
&\leq \tilde\gamma
+(1-\eps_0^2)\left( \frac{D(\delta^+ - \delta^-)}{2 \eps_0} + \vert \nabla_x V \vert  \left[D \delta^-\left(\delta^+-\frac{1}{D}\right)\right] \right) 
\leq - \tilde\gamma\, .
\end{align*}
\end{itemize}

\bigskip

\noindent
\textbf{\# Step 4: Choice of parameters.}

\bigskip

We now come back to the choice of $\delta^-, \delta^+, \eps_0, \beta$
such that $\kappa \beta < \gamma$ and $0<\gamma<\tilde \gamma$ hold
true. More precisely, these two constraints translate into the
following bound on $\beta$:
\begin{equation}\label{betabound}
 1+D\left(\frac{\delta^++\delta^-}{2}\right)<\beta
<\left(\frac{\eps_0}{\kappa+\eps_0}\right)\left(1+D\delta^+\right) \, .
\end{equation}
It is easy to see that this bound also implies
$1+D\delta^- < \beta < 1+D\delta^+$ as required. However, for this to
be possible we need to choose $\eps_0$ such that $LHS < RHS$, in other
words,
\begin{equation}\label{epsbound}
 \kappa \left( \frac{2+D\left(\delta^++\delta^-\right)}{D\left(\delta^+-\delta^-\right)}\right) < \eps_0\, .
\end{equation}
Since $\eps_0$ has to be less than $1$ and
$D(\delta^+-\delta^-)/(2+D\left(\delta^++\delta^-\right)) < 1/3$,
this bound is only possible if $\kappa \in (0,1/3)$; then it remains
to choose $0<\delta^-<\delta^+<1/D$ such
that
\begin{equation}\label{kappabound}
 \kappa < \frac{D\left(\delta^+-\delta^-\right)}{2+D\left(\delta^++\delta^-\right)} \in \left(0,\frac{1}{3}\right)\, .
\end{equation}
To satisfy all these constraints, we make the choice of parameters (for $\kappa<1/3$):
$$
\delta^+:=\frac{3(1+\kappa)}{4D}, \qquad
\delta^-:=\frac{(1-3\kappa)}{4D}\,  .
$$
Then \eqref{kappabound} holds true, and we can fix $\eps_0 \in (0,1)$ to satisfy \eqref{epsbound}:
\begin{equation*}
 \eps_0:= \frac{1}{2} \left( 1+\kappa\left(\frac{2+D\left(\delta^++\delta^-\right)}{D\left(\delta^+-\delta^-\right)}\right) \right)
 =\frac{1}{2}\left(\frac{1+9\kappa}{1+3\kappa}\right)\, .
\end{equation*}
Finally, we choose $\beta$ satisfying \eqref{betabound} as follows:
\begin{align*}
 \beta&:=\frac12\left[ 1+D\left(\frac{\delta^++\delta^-}{2}\right)
 + \left(\frac{\eps_0}{\kappa+\eps_0}\right)\left(1+D\delta^+\right)
 \right] \\
 &= \frac34+\frac{(1+9\kappa)(7+3\kappa)}{8(6\kappa^2+11\kappa+1)}
 \in (1,2)\, .
\end{align*}

%%%%%%%%%%%%%%%%%%%%%%%%%%%%%%%%%%%%%%%%%%%%%%%%%
%%%%%%%%%%%%%%%%%%%%%%%%%%%%%%%%%%%%%%%%%%%%%%%%%
%%%%%%%%%%%%%%%%%%%%%%%%%%%%%%%%%%%%%%%%%%%%%%%%
%%%%%%%%%%%%%%%%%%%%%%%%%%%%%%%%%%%%%%%%%%%%%%%%%
\section{Existence and uniqueness of a steady state}\label{sec:exuni}
%%%%%%%%%%%%%%%%%%%%%%%%%%%%%%%%%%%%%%%%%%%%%%%%%%%
%%%%%%%%%%%%%%%%%%%%%%%%%%%%%%%%%%%%%%%%%%%%%%%%%
%%%%%%%%%%%%%%%%%%%%%%%%%%%%%%%%%%%%%%%%%%%%%%%%%
%%%%%%%%%%%%%%%%%%%%%%%%%%%%%%%%%%%%%%%%%%%%%%%%
% 
% Let us prove that the linear operator $\op{L}_\kappa$ defined in
% \eqref{kinetic2} is the infinitesimal generator of a $\mC_0$-semigroup
% on $L^2(\rd \mu_\kappa)$.

%%%%%%%%%%%%%%%%%
%%%%%%%%%%%%%%%%%
%%%%%%%%%%%%%%%%%
\subsection{Existence of a $\mC_0$-semigroup}\label{sec:semigroup}
%%%%%%%%%%%%%%%%%%
%%%%%%%%%%%%%%%%%%
%%%%%%%%%%%%%%%%%%
The proof of the next theorem relies on the a priori estimates from
Section \ref{sec:hypo}. 
\begin{theorem}\label{thm:semigroup}
 The linear operator $\op{L}_\kappa : \mD(\op{L}_\kappa) \to L^2(\rd \mu_\kappa)$ defined in \eqref{kinetic2} is the infinitesimal generator of a $\mC_0$-semigroup $(S_t)_{t\geq 0}$ on $L^2(\rd \mu_\kappa)$.
\end{theorem}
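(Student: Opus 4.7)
The plan is to apply the Lumer--Phillips theorem in its quasi-dissipative form on the Hilbert space $L^2(\rd \mu_\kappa)$: it suffices to produce $\omega \in \RR$ such that $\op{L}_\kappa - \omega \op{I}$, defined initially on $\mC = C_c^\infty(\RR^2 \times \S^1)$, is dissipative and such that $\lambda \op{I} - \op{L}_\kappa$ has dense range in $L^2(\rd \mu_\kappa)$ for some $\lambda > \omega$. Density of $\mC$ in $L^2(\rd \mu_\kappa)$ and its invariance under $\op{L}_\kappa$ are immediate from the fact that $e^V + \zeta \kappa g$ is locally bounded above and bounded away from zero.

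For the quasi-dissipativity, my plan is to compute $\langle \op{L}_\kappa f, f \rangle_\kappa$ directly for $f \in \mC$ by splitting $\rd \mu_\kappa$ along its two weights. The $e^V$-contribution reduces to $\langle \op{Q} f, f \rangle_0 + \langle \op{P}_\kappa f, f \rangle_0$ by skew-symmetry of $\op{T}$ in $L^2(\rd \mu_0)$: the first term is non-positive, and the second is controlled exactly as in the bound on $\op{D}_2[f]$ in Step~3 of the proof of Proposition~\ref{hypoestimate}. The $\zeta \kappa g$-contribution is handled via the by-parts identity
\begin{equation*}
\int \op{L}_\kappa(f) \, f \, g \, \rd x \, \rd \alpha = \tfrac{1}{2}\int \mathcal{L}_\kappa(g) \, f^2 \, \rd x \, \rd \alpha - D \int |\partial_\alpha f|^2 \, g \, \rd x \, \rd \alpha,
\end{equation*}
combined with the Lyapunov-type bound \eqref{dtgbound} from Step~4. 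Choosing $\zeta > C_3/c$ ensures that the positive $g$-weighted term coming from $\op{P}_\kappa$ is absorbed by the coercive contribution $-c \int f^2 g$ produced by $\mathcal{L}_\kappa(g)$, leaving $\langle \op{L}_\kappa f, f \rangle_\kappa \leq \omega \|f\|_\kappa^2$ with $\omega$ explicit in $\kappa, D, V, \zeta$.

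The main obstacle is the range condition, since $\op{L}_\kappa$ is only partially elliptic (diffusion in $\alpha$ only). My plan is to construct, for $\lambda > \omega$ large enough, solutions to the resolvent equation $(\lambda \op{I} - \op{L}_\kappa) u = h$ by an approximation argument: introduce the elliptic regularization $\op{L}_\kappa^\eta := \op{L}_\kappa + \eta \Delta_x$ for $\eta > 0$, solve $(\lambda \op{I} - \op{L}_\kappa^\eta) u_\eta = h$ by Lax--Milgram in a suitable weighted $H^1$ space in which the regularized operator is uniformly elliptic and the bilinear form is coercive, then pass to the weak limit as $\eta \to 0$. The key point is that the previous quasi-dissipative estimate extends to $\op{L}_\kappa^\eta$ with a constant independent of $\eta$: the additional term $\eta \Delta_x$ integrated against $f$ with the weight $e^V + \zeta\kappa g$ produces a non-positive contribution up to lower order terms involving $\nabla_x(e^V + \zeta \kappa g)$ that can be absorbed by taking $\lambda$ larger. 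This provides an $\eta$-uniform bound on $u_\eta$ in $L^2(\rd \mu_\kappa)$, whose weak limit solves the original resolvent equation and thus establishes (even) surjectivity of $\lambda \op{I} - \op{L}_\kappa$. Feeding the three ingredients into Lumer--Phillips then produces the $\mC_0$-semigroup $(S_t)_{t\ge 0}$ generated by the closure of $\op{L}_\kappa|_{\mC}$.
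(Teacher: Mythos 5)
Your dissipativity half is exactly the paper's: the paper also invokes Lumer--Phillips and obtains $\langle \op{L}_\kappa f, f\rangle_\kappa \le C\|f\|_\kappa^2$ on $\mC$ from the Step~3/Step~4 estimates. Where you diverge is the second hypothesis: the paper never verifies a range condition at all — it uses the variant of Lumer--Phillips in \cite[Corollary 4.4]{Pazy} (closed, densely defined, with \emph{both} the operator and its adjoint dissipative), so the same core computation applied to $\op{L}_\kappa^*$ replaces your resolvent construction. Your vanishing-viscosity route is legitimate in principle, but as written it has two genuine gaps.

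First, the quasi-dissipativity of $\op{L}_\kappa + \eta\Delta_x$ in $L^2(\rd\mu_\kappa)$ fails in precisely the case where the weight $g$ is needed. Writing $w:=e^V+\zeta\kappa g$, integration by parts gives
\begin{equation*}
\eta\int_{\RR^2\times\S^1}\Delta_x f\, f\, w\,\rd x\,\rd\alpha \;=\; -\,\eta\int |\nabla_x f|^2 w\,\rd x\,\rd\alpha \;+\;\frac{\eta}{2}\int f^2\,\Delta_x w\,\rd x\,\rd\alpha\,,
\end{equation*}
and $\Delta_x w/w$ grows like $|\nabla_x V|^2$ (both through $\Delta_x e^V$ and through $\Delta_x g$, since $\nabla_x\log g\sim \beta\nabla_x V+\nabla_x(|\nabla_x V|\Gamma)$). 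When $|\nabla_x V|\to\infty$ this term is not bounded by $C\|f\|_\kappa^2$ for \emph{any} constant, so it cannot be "absorbed by taking $\lambda$ larger"; your $\eta$-uniform a priori bound on $u_\eta$ collapses. This is fixable by regularizing with the weighted operator $\eta\, w^{-1}\nabla_x\cdot\left(w\,\nabla_x\,\cdot\right)$, which is symmetric and non-positive in $L^2(\rd\mu_\kappa)$, but the proposal as stated does not work. Second, and more structurally: the weak limit $u$ of $u_\eta$ satisfies $(\lambda-\op{L}_\kappa)u=h$ only in the distributional (maximal-realization) sense, while your dissipativity estimate is proved on $\mC$ and hence transfers only to the closure of $\op{L}_\kappa|_{\mC}$. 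Lumer--Phillips needs the range condition for the \emph{same} realization on which dissipativity holds, so you must either show $u$ belongs to the domain of that closure, or argue by duality that non-dense range of $(\lambda-\op{L}_\kappa)|_{\mC}$ forces a nonzero weak solution $\psi$ of $(\lambda-\op{L}_\kappa^*)\psi=0$ and then rule it out — which requires an adjoint dissipativity estimate valid for such weak solutions (justifying the integration by parts at infinity despite the unbounded coefficients). Identifying the minimal and maximal realizations of this degenerate operator is exactly the difficulty the paper's Corollary-4.4 argument is designed to bypass; your proposal currently leaves it open (and, more minorly, the Lax--Milgram step itself needs a truncation/exhaustion argument, since the first-order coefficients are unbounded and the bilinear form is not continuous on a fixed weighted $H^1$ space).
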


\begin{proof}
  Let us denote by $\op{L}_\kappa^*$ the adjoint of $\op{L}_\kappa$ in
  $L^2(\rd \mu_\kappa)$. Both domains $\mD\left(\op{L}_\kappa\right)$
  and $\mD\left(\op{L}_\kappa^*\right)$ contain the core $\mC$ and are
  dense. The operator $\op{L}_\kappa$ is closable in
  $L^2(\rd \mu_\kappa)$. To see this, take a sequence
  $(f_n)_{n \in \N} \in \mD(\op{L}_\kappa)$ converging to zero in
  $L^2(\rd \mu_\kappa)$ such that the sequence
  $(\op{L}_\kappa f_n)_{n \in \N}$ converges to some limit
  $h \in L^2(\rd \mu_\kappa)$. Then for any test function
  $\varphi \in \mC$,
$$
\bla \varphi\, , \, \op{L}_\kappa f_n\bra_{\kappa}
= \bla \op{L}_\kappa^*\varphi\, , \, f_n\bra_{\kappa} \to 0 \qquad \text{as} \, \, n \to \infty\, .
$$
Since the left-hand side converges to
$\bla \varphi\, , \, h\bra_{\kappa}$ for all $\varphi \in \mC$, we
conclude $h\equiv 0$ a.e., and so $\op{L}_\kappa$ is
closable. Similarly, $\op{L}_\kappa^*$ is closable.  We denote by
$\tilde{\op{L}}_\kappa$ and $\tilde{\op{L}}_\kappa^*$ some closed
extensions of $\op{L}_\kappa$ and $\op{L}_\kappa^*$, respectively.
Lumer-Phillips Theorem in the form \cite[Corollary 4.4]{Pazy} states
that an operator $\tilde{\op{L}}$ generates a $\mC_0$-semigroup if
$\tilde{\op{L}}$ is closed and both $\tilde{\op{L}}$ and
$\tilde{\op{L}}^*$ are dissipative.  Since the core $\mC$ is dense in
both $\mD(\tilde{\op{Q}}_\kappa)$ and $\mD(\tilde{\op{Q}}_\kappa^*)$,
which in turn are both dense in $L^2(\rd \mu_\kappa)$, then for any constant $C>0$,
$\tilde{\op{L}}_\kappa-C\op{Id}$ is dissipative if and only if
$\tilde{\op{L}}_\kappa^*-C\op{Id}$ is dissipative.
Therefore, it remains to show that $\tilde{\op{L}}_\kappa-C\op{Id}$ is
dissipative for some $C>0$.  Since the restriction of
$\tilde{\op{L}}_\kappa$ to $\mC$ is $\op{L}_\kappa$, it is enough to
prove that $\op{L}_\kappa-C\op{Id}$ is dissipative on $\mC$ for some constant
$C>0$. The estimates in Section \ref{sec:hypo} show that there exists
$C>0$ s.t. 
\begin{equation*}
\forall f \in \mC, \qquad \bla \op{L}_\kappa f\, , \, f \bra_{\kappa} \leq C \|f\|_{\kappa}^2
\end{equation*}
for some explicit constant $C>0$, which concludes the proof. 
\end{proof}

\subsection{Proof of Theorem \ref{hypothm}}
Proposition \ref{hypoestimate} is the key ingredient to deduce existence of a unique steady state.
The set
\begin{equation*}
 \mB :=\left\{ f \in L^2(\rd \mu_\kappa) \, : \, \op{G}[f] \leq \frac{\gamma_2}{\gamma_1}, \, f \geq 0, \, M_f = 1 \right\}
\end{equation*}
is convex and bounded in $L^2(\rd \mu_\kappa)$ by the norm equivalence
\eqref{normequiv}. By Theorem~\ref{thm:semigroup}, the operator
$\op{L}_\kappa$ generates a $\mC_0$-semigroup $(S_t)_{t\geq 0}$. Then
let us show that $\mB$ is invariant under the action of
$(S_t)_{t\geq 0}$.  Integrating in time the hypocoercivity estimate
\eqref{Gbound} in Proposition \ref{hypoestimate} for any
$f_{\mbox{{\scriptsize in}}} \in L^2(\rd \mu_\kappa)$ with mass $1$, we obtain the bound
\begin{align*}
 \op{G}[f(t)] \leq \op{G}[f_{\mbox{{\scriptsize in}}}] e^{- \gamma_1 t}  + \frac{\gamma_2}{\gamma_1} \left(1-e^{- \gamma_1 t}\right)\, ,
\end{align*}
and thus
\begin{equation*}
 \quad \forall \, t >0, \qquad  \op{G}[f(t)] \leq \max\left\{ \op{G}[f_{\mbox{{\scriptsize in}}}], \frac{\gamma_2}{\gamma_1}  \right\}.
\end{equation*}
Since in addition, $(S_t)_{t\geq 0}$ conserves mass and positivity, we
conclude $S_t(\mB) \subset \mB$ for all times. 

Integrate again the hypocoercivity estimate (\ref{Gbound}) in
Proposition \ref{hypoestimate}, now for the difference of two
solutions with same mass, to get
\begin{align*}
 \op{G}[S_t f - S_t h] \leq  e^{- \gamma_1 t} \op{G}[f - h ]
\end{align*}
for any $t>0$ and $f, h \in \mB$. It follows by Banach's fixed-point theorem that there exists a unique $u^t \in \mB$ such
that $S_t (u^t) = u^t$ for each $t > 0$. % In fact, there exists a
% function $u \in \mB$ such that $S_t (u) = u$ for all
% $t \geq 0$. To see this, 
Let $t_n := 2^{-n}$, $n \in \N$, and $u_n := u^{t_n}$. Then
$S_{2^{-n}} (u_n) = u_n$, and by repeatedly applying the semigroup
property,
\begin{equation}\label{eq:semigroup}
 \forall \, k \in \N, \, \forall \, m \le n\in \N, \quad S_{k2^{-m}} (u_n) = u_n\, .
\end{equation}

Let us prove that $\mB$ is weakly compact in $L^2(\rd
 \mu_\kappa)$. Consider a sequence $(f_n)_{n\in\N} \in \mB$. It has a
 cluster point $f$ for the weak convergence since $\mB$ is bounded in
 $L^2(\rd \mu_\kappa)$, and the corresponding subsequence is still
 denoted $f_n$ for simplicity. By lower semi-continuity of the equivalent
 norm $\op{G}$:
 $$
 \op{G}[f] \leq \liminf_{n \to \infty}\op{G}[f_n] \leq \gamma_2/\gamma_1\, .
 $$
 Further, since $f_n\geq 0$ for all $n \in \N$, it follows that
 $f\geq 0$ (the set of non-negative functions is a strongly closed convex set,
 hence weakly closed). It remains to show that the limit $f$ has mass $1$ by preventing loss of mass at infinity. Use Cauchy-Schwarz's inequality and
 the norm equivalence \eqref{normequiv} to get for $r>0$
 \begin{align*}
 \left(1+\kappa \zeta\right) \left(\int_{|x|>r} \op{\Pi}f_n\, dx\right)^2
%  &\quad
 &\leq
 \left(\int_{|x|>r}\int_{\S^1} f_n^2 \, e^V\frac{\rd x \rd \alpha}{2\pi}\right)\left( \int_{|x|>r}\int_{\S^1} e^{-V}\, \frac{\rd x \rd \alpha}{2\pi}\right)\\
 &\quad+ \kappa \zeta \left(\int_{|x|>r}\int_{\S^1} f_n^2 \, g\frac{\rd x \rd \alpha}{2\pi}\right)\left(\int_{|x|>r}\int_{\S^1} g^{-1}\,\frac{\rd x \rd \alpha}{2\pi}\right)\\
   &\leq
 \|f_n\|_\kappa^2 \left( \int_{|x|>r}\int_{\S^1} \left(e^{-V}+g^{-1}\right) \,\frac{\rd x \rd \alpha}{2\pi}\right)\\
 &\leq
 \left(\frac{2}{1-\eps_1}\right)\frac{\gamma_2}{\gamma_1}\left(\int_{|x|>r}\int_{\S^1} 2e^{-V} \,\frac{\rd x \rd \alpha}{2\pi}\right)\, .
 \end{align*}
 This shows that
 \begin{align*}
  &\sup_{n \in \N} \left(\int_{|x|>r} \op{\Pi}f_n\, dx\right)\\
  &\quad\leq \left(\left(\frac{4}{(1-\eps_1)(1+\kappa \zeta)}\right)
    \left(\frac{\gamma_2}{\gamma_1}\right)\right)^{1/2}
    \left(\int_{|x|>r}\int_{\S^1} e^{-V} \, 
\frac{\rd x \rd \alpha}{2\pi}\right)^{1/2} \to 0 \quad \text{as}\, \, r \to \infty\, ,
 \end{align*}
 % and therefore tightness follows,
 % \begin{equation*}
 % \sup_{n \in \N} \left(\int_{|x|>r} \op{\Pi}f_n\, dx\right) \to 0 \quad \text{as}\, \, r \to \infty\, ,
 % \end{equation*}
 since $\int_{\RR^2\times \S^1} e^{-V} \tfrac{\rd x \rd \alpha}{2\pi}=1$. Together with $M_{f_n}=1$ for all $n \in \N$, it
 follows that $M_f = 1$. Hence
 $f \in \mB$. The weak compactness of $\mB$ implies the existence of a
 subsequence $u_{n_j}$ of $u_n$ and a function $u \in \mB$ such that
 $u_{n_j}$ converges weakly to $u$ in $L^2(\rd \mu_\kappa)$. Letting
 $n_j \to \infty$ in \eqref{eq:semigroup} implies that (since
 $S_t$ is a continuous operator)
 $$
 \forall \, m \in \N, \, \forall \, k \in \N, \quad S_{k2^{-m}} (u) = u\, .
 $$
 % Fix $m \in \N$. Then for all $M \in \N$,
 % $$
 % S_{2^{-m-M}}(u_{m+M}) = u_{m+M}.
 % $$
 % Define $M_j = m_j - m$ for all $ j \in \N$ such that $M_j > 0$. We have
 % $$
 % S_{k 2^{-m}} (u_{m+M_j}) 
 % = S_{k 2^{M_j}2^{-m-M_j}} (u_{m+M_j})
 % = u_{m+M_j}\, .
 % $$
 % By continuity of $S_t(\cdot)$ in the weak topology,
 % $u_{m+M_j} \rightharpoonup u$ as $j \longrightarrow \infty$ implies
 % $$
 % S_{k 2^{-m}} (u_{m+M_j}) \rightharpoonup S_{k 2^{-m}} (u),
 % $$
 % which proves
 % $
 % S_{k 2^{-m}} (u) = u
 % $
 % as claimed.
 % By 
 Finally the density of the dyadic rationals
 $\left\{ k 2^{-m}: k \in \N, m \in \N \right\}$ in $(0,+\infty)$ and
 continuity of $S_t (u)$ in $t$ for all $u \in \mB$ imply that 
 $$
 \forall \, t \geq 0, \quad S_t (u) = u\, .
 $$
 This shows the existence and uniqueness of a global stationary state
 $F_\kappa:=u\in \mB$.\\
 
 To complete the proof of Theorem \ref{hypothm}, we apply the
 hypocoercivity estimate Proposition \ref{hypoestimate} to the difference between a solution $f \in L^2(\rd \mu_\kappa)$ and the unique stationary
 state of the same mass, $M_f F_\kappa$, to show exponential
 convergence to equilibrium in $\|\cdot\|_{\kappa}$: first of all, we deduce from the contraction estimate \eqref{Gbound} that
 \begin{align*}
   \op{G}[f(t)-M_f F_\kappa]  \leq  \op{G}[f_{\mbox{{\scriptsize in}}}-M_f F_\kappa] \, e^{-\gamma_1 t}\, ,
 \end{align*}
which allows then to estimate the difference to equilibrium in the $L^2(\rd \mu_\kappa)$-norm. Indeed, by norm equivalence, we obtain 
\begin{equation*}
\left\| f(t)-M_f F_\kappa \right\|_\kappa^2 \leq \frac{1+\eps_1}{1-\eps_1}\|f_{\mbox{{\scriptsize in}}}-M_f F_\kappa\|_\kappa^2 \, e^{-\gamma_1 t}\, .
 \end{equation*}
Hence, we obtain \eqref{expdecay} with rate of convergence $\lambda_\kappa:= \gamma_1/2$.

\section{Concluding remarks}
\label{sec:concluding-remarks}

From our previous estimates, we have that $\op{G}(F_k)$ is uniformly
bounded in $\kappa$ for $\kappa$ sufficiently small. As a consequence,
$(F_\kappa)_{\kappa >0}$ is a relatively weakly compact family in
$L^2(\rd \mu_\kappa)$, and by uniqueness of the stationary state in
the case $\kappa = 0$, we deduce that $F_\kappa \to F_0$ as
$\kappa \to 0$. It could also be proved with further work that the
optimal (spectral gap) relaxation rate is continuous as
$\kappa \to 0$.

Working in $L^2(\rd \mu_\kappa) \subset L^2(\rd \mu_0)$ we are
treating the operator $\op{L}_\kappa$ as a small perturbation of the
case $\kappa=0$ with stationary conveyor belt. The natural space to
investigate the convergence to $F_\kappa$ in the case $\kappa>0$
however is $L^2\left(F_\kappa^{-1}\, \rd x \, \rd \alpha\right)$.  In
this $L^2$-space the transport operator $\op{T}-\op{P}_\kappa$ is not
skew-symmetric and the collision operator $\op{Q}$ is not
self-adjoint, so the hypocoercivity method \cite{Dolbeault2012long}
cannot be applied. To get around this, one can split the operator
$\op{L}_\kappa$ differently into a transport and a collision part
following the approach in \cite{Calvez}. More precisely, we can write
$\op{L}_\kappa = \tilde{\op{Q}}-\tilde{\op{T}}$ where
  \begin{equation*}
\begin{cases}
   &\tilde{\op{Q}} f = \partial_\alpha \left( D \partial_\alpha f - \frac{\partial_\alpha F_\kappa}{F_\kappa} f\right)\, ,\\[2mm]
   &\tilde{\op{T}} f = \left(\tau + \kappa e_1\right)\cdot \nabla_x f
   - \partial_\alpha\left[ \left(\tau^\bot \cdot \nabla_x V +
       \frac{\partial_\alpha F_\kappa}{F\kappa}\right) f \right]\, .
\end{cases}
  \end{equation*}
  Then in $L^2\left(F_\kappa^{-1}\, \rd x \, \rd \alpha\right)$ the
  operator $\tilde{\op{Q}}$ is symmetric and negative semi-definite,
  and the operator $\tilde{\op{T}}$ is skew-symmetric.  Furthermore,
  the stationary state $F_\kappa$ lies in the intersection of the
  kernels of the collision and transport operators, i.e.
  $F_\kappa \in \text{Ker}\,\tilde{\op{Q}} \cap
  \text{Ker}\,\tilde{\op{T}}$. The hypocoercivity approach requires
  microscopic and macroscopic coercivity of $\tilde{\op{Q}}$ and
  $\tilde{\op{T}}$. Which then requires as in \cite{Calvez} to control
  the behaviour of the stationary state at infinity, i.e. for large
  enough $|x|$,
  \begin{equation*}
   \forall \, \alpha \in \S^1, \quad e^{-\sigma_1 V(x)} \leq F_\kappa (x, \alpha) \leq e^{-\sigma_2 V(x)}
  \end{equation*}
  for some constants $\sigma_1, \sigma_2 >0$. If true, this would be an
  important physical information on the stationary state, but we do
  not know how to prove it at now. Even with this information at hand,
  this approach requires that the existence of the stationary state is
  known a priori. The rate of convergence one obtains in this case may
  be different from the rate obtained here, and it is not clear which
  method yields the better rate as both are most likely not optimal.
  
%%%%%%%%%%%%%%%%%%%%%%%%%%%%%%%%%%%%%%%%
%%%%%%%%%%%%%%%%%%%%%%%%%%%%%%%%%%%%%%%%
%%%%%%%%%%%%%%%%%%%%%%%%%%%%%%%%%%%%%%%%

\section*{Acknowledgments}
EB is very grateful to the University of Cambridge for its sunny
hospitality during the second semester of the academic year
2015-2016. EB and CM acknowledge the support of the ERC Grant MATKIT
(ERC-2011-StG). FH acknowledges support from the Engineering and
Physical Sciences Research Council (UK) grant number EP/H023348/1 for
the University of Cambridge Centre for Doctoral Training, the
Cambridge Centre for Analysis. The authors are very grateful to the
two anonymous referees for very fruitful and detailed comments and
remarks.

%%%%%%%%%%%%%%%%%%%%%%%%%%%%%%%%%%%%%%%%%%%%%%%%%%%%%%%%%%%%%
%%%%%%%%%%%%%%%%%%%%%%%%%%%%%%%%%%%%%%%%%%%%%%%%%%%%%%%%%%%%%
%%%%%%%%%%%%%%%%%%%%%%%%%%%%%%%%%%%%%%%%%%%%%%%%%%%%%%%%%%%%%
%%%%%%%%%%%%%%%%%%%%%%%%%%%%%%%%%%%%%%%%%%%%%%%%%%%%%%%%%%%%%
%%%%%%%%%%%%%%%%%%%%%%%%%%%%%%%%%%%%%%%%%%%%%%%%%%%%%%%%%%%%%
%%%%%%%%%%%%%%%%%%%%%%%%%%%%%%%%%%%%%%%%%%%%%%%%%%%%%%%%%%%%%

\bibliographystyle{abbrv}
\bibliography{hypobooks}

\begin{thebibliography}{10}

\bibitem{Calvez}
V.~Calvez, G.~Raoul, and C.~Schmeiser.
\newblock Confinement by biased velocity jumps: aggregation of {\it
  {e}scherichia coli}.
\newblock {\em Kinet. Relat. Models}, 8(4):651--666, 2015.

\bibitem{MR1787105}
L.~Desvillettes and C.~Villani.
\newblock On the trend to global equilibrium in spatially inhomogeneous
  entropy-dissipating systems: the linear {F}okker-{P}lanck equation.
\newblock {\em Comm. Pure Appl. Math.}, 54(1):1--42, 2001.

\bibitem{MR2116276}
L.~Desvillettes and C.~Villani.
\newblock On the trend to global equilibrium for spatially inhomogeneous
  kinetic systems: the {B}oltzmann equation.
\newblock {\em Invent. Math.}, 159(2):245--316, 2005.

\bibitem{Dolbeault2012short}
J.~Dolbeault, A.~Klar, C.~Mouhot, and C.~Schmeiser.
\newblock Exponential rate of convergence to equilibrium for a model describing
  fiber lay-down processes.
\newblock {\em Appl. Math. Res. Express. AMRX}, (2):165--175, 2013.

\bibitem{Dolbeault2012long}
J.~Dolbeault, C.~Mouhot, and C.~Schmeiser.
\newblock Hypocoercivity for linear kinetic equations conserving mass.
\newblock {\em Trans. Amer. Math. Soc.}, 367(6):3807--3828, 2015.

\bibitem{Goetz2007}
T.~G{\"o}tz, A.~Klar, N.~Marheineke, and R.~Wegener.
\newblock A stochastic model and associated {F}okker-{P}lanck equation for the
  fiber lay-down process in nonwoven production processes.
\newblock {\em SIAM J. Appl. Math.}, 67(6):1704--1717, 2007.

\bibitem{GrothausKlar}
M.~Grothaus, A.~Klar, J.~Maringer, and P.~Stilgenbauer.
\newblock Geometry, mixing properties and hypocoercivity of a degenerate
  diffusion arising in technical textile industry.
\newblock {\em arXiv/1203.4502}, 2012.

\bibitem{MR1946444}
Y.~Guo.
\newblock The {L}andau equation in a periodic box.
\newblock {\em Comm. Math. Phys.}, 231(3):391--434, 2002.

\bibitem{MR2034753}
F.~H\'erau and F.~Nier.
\newblock Isotropic hypoellipticity and trend to equilibrium for the
  {F}okker-{P}lanck equation with a high-degree potential.
\newblock {\em Arch. Ration. Mech. Anal.}, 171(2):151--218, 2004.

\bibitem{KlarMaringerWegener2010}
A.~Klar, J.~Maringer, and R.~Wegener.
\newblock A 3{D} model for fiber lay-down in nonwoven production processes.
\newblock {\em Math. Models Methods Appl. Sci.}, 22(9):1250020, 18, 2012.

\bibitem{Kolb2011}
M.~Kolb, M.~Savov, and A.~W{\"u}bker.
\newblock Geometric ergodicity of a hypoelliptic diffusion modelling the
  melt-spinning process of nonwoven materials.
\newblock {\em arXiv/1112.6159}, 2011.

\bibitem{Kolb2013}
M.~Kolb, M.~Savov, and A.~W{\"u}bker.
\newblock ({N}on-)ergodicity of a degenerate diffusion modeling the fiber lay
  down process.
\newblock {\em SIAM J. Math. Anal.}, 45(1):1--13, 2013.

\bibitem{MR2043729}
T.-P. Liu, T.~Yang, and S.-H. Yu.
\newblock Energy method for {B}oltzmann equation.
\newblock {\em Phys. D}, 188(3-4):178--192, 2004.

\bibitem{Marheineke2006}
N.~Marheineke and R.~Wegener.
\newblock Fiber dynamics in turbulent flows: general modeling framework.
\newblock {\em SIAM J. Appl. Math.}, 66(5):1703--1726 (electronic), 2006.

\bibitem{Marheineke2007}
N.~Marheineke and R.~Wegener.
\newblock Fiber dynamics in turbulent flows: specific {T}aylor drag.
\newblock {\em SIAM J. Appl. Math.}, 68(1):1--23 (electronic), 2007.

\bibitem{Pazy}
A.~Pazy.
\newblock {\em Semigroups of linear operators and applications to partial
  differential equations}, volume~44 of {\em Applied Mathematical Sciences}.
\newblock Springer-Verlag, New York, 1983.

\bibitem{Villani}
C.~Villani.
\newblock Hypocoercivity.
\newblock {\em Mem. Amer. Math. Soc.}, 202(950):iv+141, 2009.

\end{thebibliography}

\end{document}